\newcommand{\MM}[0]{
{\bf M} }
\newcommand{\NN}[0]{
{\mathbb{N}}
}
\newcommand{\norm}[2]{
\left\| #2 \right\|_{#1} }
\def\bes{\begin{eqnarray*}}
\def\ens{\end{eqnarray*}}
\theoremstyle{definition}
\theoremstyle{remark}
\numberwithin{equation}{section}
\newtheorem{thm}{Theorem}[section]
 \newtheorem{cor}[thm]{Corollary}
 \newtheorem{lem}[thm]{Lemma}
 \newtheorem{prop}[thm]{Proposition}
 \theoremstyle{definition}
 \newtheorem{defn}[thm]{Definition}
 \theoremstyle{remark}
 \numberwithin{equation}{section}
\theoremstyle{plain}
\def\bes{\begin{eqnarray*}}
\def\ens{\end{eqnarray*}}
\theoremstyle{definition}
\theoremstyle{remark}
\numberwithin{equation}{section}
\begin{document}

\title[Multipliers for  $p$-Bessel sequences ]{Multipliers for  $p$-Bessel sequences in Banach spaces}
\author[A. Rahimi \& P. Balazs]{A. Rahimi$^*$ \and P. Balazs$^\dagger$}
\address{$^*$ Department of Mathematics, University of Maragheh, P. O. Box 55181-83111, Maragheh, Iran.}
\email{asgharrahimi@yahoo.com}
\address{$^\dagger$ Acoustics Research Institute, Austrian Academy of Sciences, Wohllebengasse 12-14, 1040 Wien, Austria.}
\email{Peter.Balazs@oeaw.ac.at}

\subjclass[2000]{Primary 42C40; Secondary 41A58, 47A58,.}

\keywords{Multiplier operator, Bessel sequence, Frame, Schauder
basis, $p-$frame, $(p,q)-$Bessel multiplier, $p-$Schatten
operator, Nuclear operator, $(r,p,q)-$Nuclear operators.}

\begin{abstract}
Multipliers have been recently introduced as operators for Bessel
sequences and frames in Hilbert spaces. These operators
 are defined by a fixed multiplication pattern (the symbol)
which is inserted between the analysis and synthesis operators. In
this paper, we will generalize the concept of Bessel multipliers for
$p$-Bessel and $p$-Riesz sequences in Banach spaces. It will be
shown that bounded symbols lead to bounded operators. Symbols
converging to zero induce compact operators. Furthermore, we will
give sufficient conditions for multipliers to be nuclear operators.
Finally, we will show the continuous dependency of the multipliers
on their parameters.

\end{abstract} \maketitle
\normalsize
\section{Introduction and Preliminaries}

\noindent
\subsection{Introduction}
In \cite{schatt1}, R. Schatten provided a detailed study of ideals
of compact operators using their singular decomposition. He
investigated the operators of the form
$\sum_{k}\lambda_{k}\varphi_{k}\otimes\overline{\psi_{k}}$ where
$(\phi_{k})$ and $(\psi_{k})$ are orthonormal families. In
\cite{xxlmult1} the orthonormal families were replaced with Bessel
and frame sequences to define Bessel and frame multipliers.
\begin{defn}
Let $\mathcal{H}_{1}$ and $\mathcal{H}_{2}$ be Hilbert spaces, let
$(\psi_{k})\subseteq\mathcal{H}_{1}$ and
$(\phi_{k})\subseteq\mathcal{H}_{2}$ be Bessel sequences. Fix
$m=(m_k)\in l^{\infty}(K)$. The operator ${\bf M}_{m, ( \phi_k),
(\psi_k)} : \mathcal{H}_{1} \rightarrow \mathcal{H}_{2}$ defined by
$$ {\bf M}_{m, (\phi_k), ( \psi_k )} (f)  =  \sum \limits_k m_k
\langle f,\psi_k\rangle \phi_k $$ is called the Bessel multiplier
for the Bessel sequences $(\psi_{k})$ and $(\phi_{k})$. The sequence
$m$ is called the symbol of {\bf M}.
\end{defn}
Several basic properties of these operators were investigated in
\cite{xxlmult1}. For a theoretical approach it is very natural to
extend this notion and consider such operators in more general
settings. For $p$-Bessel sequences in Banach spaces is leading to
interesting results in functional analysis and operator theory.

We are going to show only theoretical properties. Nevertheless it
should be mentioned, that multipliers are not only interesting from
a theoretical point of view, see e.g.
\cite{xxlframehs07,xxlframoper1,doetor09}, but they are also used in
applications, in particular in the fields of audio and acoustics.
The first frame multipliers investigated were Gabor (frame)
multipliers \cite{feinow1}.
In signal processing they are used 
 under the name
'Gabor filters' \cite{hlawatgabfilt1} as a particular choice to
implement a time-variant filter. In computational auditory scene
analysis they are known by the name 'time-frequency masks'
\cite{wanbro06} and are used to extract single sound source out of a
mixture of sounds in a way linked to human auditory perception.
 In real-time implementations of filtering systems, they approximate time-invariant filters \cite{stx05} as they are easily implementable.
 On the other hand, as a particular way to implement time-variant filters, they are used for example for
 sound morphing \cite{DepKronTor07} or psychoacoustical
  modeling \cite{xxllabmask1}.
  In general the idea for a Gabor (or wavelet) multiplier is to amplify or attenuate parts of audio signal, which can be separated in the time-frequency plane.

Clearly Banach space theory is right at the foundation of functional
analysis and operator theory, and as such is relevant for theory.
But it recently also has become more and more important for
time-frequency analysis, see e.g. \cite{grhe99}. It is used in
engineering applications in compressed sensing, refer e.g. to
\cite{rascva06}, as well as in audio or image sampling
\cite{aldrgroech1}. Applications in wireless communication can be
envisioned \cite{grrz08,grhahlmasc07}. Therefore we hope that the
results in this paper are not only interesting from a theoretical
point of view but can be applied in the not-too-far future, for
example by combining multipliers with the concept of sparsity and
persistence \cite{Kowalski10sparsity}.
\\

 In this paper, we define and
investigate multipliers in Banach spaces. In Section
\ref{sec:prel0}, we will give the basic definitions and known
results needed. In Section \ref{sec:multban0} we will give basic
results for multipliers for $p$-Bessel sequences.  In particular we
will show that multipliers with bounded symbols are well-defined
bounded operators with unconditional convergence and that symbols
converging to zero correspond to compact operators.
 Section \ref{sec:nucl0} will look at sufficient
conditions for multipliers to be $(r,p,q)$-nuclear. Finally, in
Section \ref{sec:changing0}, we will look at how the multipliers
depend on the given parameters, i.e. the analysis and synthesis
sequences as well as the symbol. We will show that this dependence
is continuous, using a similarity of sequences in an $l^p$ sense.
\subsection{Preliminaries} \label{sec:prel0}
We will only consider reflexive Banach spaces. We will assume that
$p, q>0$  are real numbers such that $\frac{1}{p}+\frac{1}{q} = 1$.
For any separable Banach space, we can define $p$-frame and
$p$-Bessel sequences \cite{alsuta01,chst03} as follows:

\begin{defn}
A countable family $(g_{i})_{i\in I}\subseteq X^{*}$ is a $p$-frame
for the Banach space $X $ $(1<p<\infty)$ if constants $A, B>0$ exist
such that
$$ A\|f\|_{X}\leq \left(\sum_{i\in I} |g_{i}(f)|^{p}\right)^{\frac{1}{p}}\leq
B\|f\|_{X}\quad\textrm{for all}\quad
 f \in X.$$
 It is called a $p$-Bessel sequence with bound $B$ if the second inequality holds.
\end{defn}

For $p$-Bessel sequences we can define the analysis operator $ U : X
\rightarrow l^p$ with $U (f) = \left( g_i (f) \right)$. Following
the definition we see that $ \| U \| \le B$. Furthermore, let $T :
l^q  \rightarrow X^*$ be the synthesis operator defined by $T \left(
\left(d_i  \right) \right) = \sum \limits_i d_i g_i$.
\begin{prop} \label{sec:besselbound1} \cite{chst03}   $( g_{i} ) \subseteq X^*$ is a $p$-Bessel sequence with bound $B$
if and only if $T$ is a well-defined (hence bounded) operator from
$l^q$ into $X^*$ and $\| T \| \le B$. In this case $T \left(
\left(d_i  \right) \right) = \sum \limits_i d_i g_i$ converges
unconditionally.
\end{prop}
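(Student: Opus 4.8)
The plan is to recognize the synthesis operator $T$ as the Banach-space adjoint of the analysis operator $U$, exploiting the duality $(l^p)^* = l^q$ together with the standing reflexivity assumption, so that the statement reduces to the elementary fact that a bounded operator and its adjoint share the same norm. Throughout I identify a sequence $d=(d_i)\in l^q$ with the functional $(c_i)\mapsto \sum_i c_i d_i$ on $l^p$, and I use that the canonical pairing between $X$ and $X^*$ extends the evaluations $g_i(f)$.

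For the forward implication, assume $(g_i)$ is $p$-Bessel with bound $B$. The cleanest self-contained route is a Hölder estimate on finite index sets: for a finite $F\subseteq I$ and $d\in l^q$,
\[
\Big\| \sum_{i\in F} d_i g_i \Big\|_{X^*} = \sup_{\|f\|_X\le 1}\Big|\sum_{i\in F} d_i g_i(f)\Big| \le \Big(\sum_{i\in F}|d_i|^q\Big)^{1/q}\sup_{\|f\|_X\le 1}\Big(\sum_{i\in F}|g_i(f)|^p\Big)^{1/p}\le B\Big(\sum_{i\in F}|d_i|^q\Big)^{1/q},
\]
where the last inequality uses the Bessel bound. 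Since $d\in l^q$, the tails $\big(\sum_{i\notin F_0}|d_i|^q\big)^{1/q}$ tend to $0$, so the generalized Cauchy criterion shows that $\sum_i d_i g_i$ converges unconditionally; taking $F=\{1,\dots,N\}$ and letting $N\to\infty$ in the estimate gives $\|Td\|_{X^*}\le B\|d\|_q$, i.e. $T$ is well defined and bounded with $\|T\|\le B$. Equivalently, one checks $(U^* d)(f)=\sum_i d_i g_i(f)=(Td)(f)$, so $T=U^*$ and $\|T\|=\|U\|\le B$, since $\|U\|\le B$ is immediate from the Bessel condition.

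For the converse, suppose only that $T$ is well defined, i.e. that $\sum_i d_i g_i$ converges in $X^*$ for every $d\in l^q$. Boundedness is not free and is obtained from the uniform boundedness principle: the partial-sum operators $T_N d=\sum_{i=1}^N d_i g_i$ are bounded on $l^q$, and $T_N d\to Td$ for each $d$, so $\sup_N\|T_N\|=:M<\infty$ and hence $\|Td\|\le M\|d\|_q$. Now form the adjoint $T^*$; by reflexivity $X^{**}=X$, so $T^*$ maps $X$ into $(l^q)^*=l^p$, and evaluating against the standard basis vectors $e_j$ gives $(T^*f)(e_j)=f(Te_j)=g_j(f)$, whence $T^*f=(g_i(f))=Uf$. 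Therefore $\big(\sum_i|g_i(f)|^p\big)^{1/p}=\|T^*f\|_p\le\|T^*\|\,\|f\|_X=\|T\|\,\|f\|_X\le B\|f\|_X$, which is exactly the $p$-Bessel condition with bound $B$.

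I expect the main obstacle to be the converse direction, specifically the passage from \emph{well defined} to \emph{bounded}: one cannot directly invoke the closed graph theorem, because continuity of the map $d\mapsto\sum_i d_i g_i(f)$ already presupposes $(g_i(f))\in l^p$, which is precisely the Bessel property under construction. Routing this through Banach--Steinhaus for the partial sums sidesteps the circularity, after which the adjoint identification $U=T^*$, valid thanks to reflexivity and $(l^p)^*=l^q$, simultaneously delivers the norm bound and the matching Bessel constant.
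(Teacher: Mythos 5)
The paper does not actually prove this proposition: it is quoted verbatim from the reference [Christensen--Stoeva, \emph{$p$-frames in separable Banach spaces}], so there is no in-paper argument to compare against. Judged on its own, your proof is correct and complete, and it is essentially the standard argument (the one in the cited source): in the forward direction, H\"older's inequality on finite index sets plus the Bessel bound gives
$\bigl\|\sum_{i\in F} d_i g_i\bigr\|_{X^*}\le B\bigl(\sum_{i\in F}|d_i|^q\bigr)^{1/q}$,
from which the Cauchy criterion yields unconditional convergence and $\|T\|\le B$; in the converse direction, you correctly identify that the real content is passing from ``well defined'' to ``bounded,'' handle it via Banach--Steinhaus applied to the partial-sum operators $T_N$ (each bounded, converging pointwise to $T$), and then recover the Bessel inequality from $T^*=U$ using reflexivity and $(l^q)^*=l^p$. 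Your remark about why the closed graph theorem cannot be invoked directly --- establishing closedness of the graph would already require $(g_i(f))\in l^p$, i.e.\ the very Bessel property being proved --- is a genuine subtlety and is resolved correctly by your routing through the uniform boundedness principle. The only cosmetic point: the hypothesis $\|T\|\le B$ in the converse is given, so the UBP step serves only to justify the parenthetical ``(hence bounded)'' in the statement; your write-up uses it exactly this way, which is fine.
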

Furthermore $\| g_i \| \le B$.

\begin{defn}
Let $Y$ be a Banach space. A family $(g_{i})_{i\in I}\subset Y$ is a
$q$-Riesz sequence $(1<q<\infty)$ for $Y$ if
 constants
$A, B>0$ exist such that for all finite scalar sequence $(d_{i})$,
\begin{equation}\label{qRiesz} A(\sum_{i\in I}|d_{i}|^{q})^{\frac{1}{q}}\leq
\parallel\sum_{i\in I} d_{i}g_{i}\parallel_{Y}\leq B(\sum_{i\in
I}|d_{i}|^{q})^{\frac{1}{q}}.\end{equation}

The family is called a $q$-Riesz basis $(1<q<\infty)$ for $Y$ if it
fulfills (\ref{qRiesz}) and $\overline{span}\{g_{i}\}_{i\in I}=Y$.
\end{defn}
\par It
immediately follows from the definition that, if $(g_{i})_{i\in I}$
is a $q$-Riesz basis then $A\leq \|g_{i}\|_{Y}\leq B$ for all $i\in
I.$
 Any $q$-Riesz basis for $X^*$ is a $p$-frame for $X$
\cite{chst03}. The following proposition shows a connection between
these two notions similar to the case for Hilbert spaces:
\begin{prop}\label{ri}\cite{chst03}
Let $(g_{i})_{i\in I}\subset X^*$ be a $p$-frame for $X$. Then the
following are equivalent:
\begin{enumerate}
\item $(g_{i})_{i\in I}$ is a $q$-Riesz basis for $X^*$.
\item  If  $(d_{i})_{i\in I} \in l^q$ and $\sum d_{i}g_{i}=0$, then $d_{i}=0$ for all $i\in I$.
\item $(g_{i})_{i\in I}$ has a biorthogonal sequence $(f_{i})_{i\in I}\subset
X$, i.e., a family for which $g_{i}(f_{j})=\delta_{i,j}$ (Kronecker
delta), for all $i,j\in I$.
\end{enumerate}
\end{prop}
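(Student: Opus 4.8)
The plan is to exploit the duality between the analysis and synthesis operators, which reflexivity makes available. The decisive structural observation is that $T$ is exactly the Banach-space adjoint of $U$: for $(d_i)\in l^q$ and $f\in X$ one has $(T(d_i))(f)=\sum_i d_i g_i(f)=\langle (d_i),Uf\rangle$, where the unconditional convergence from Proposition \ref{sec:besselbound1} justifies evaluating the sum termwise, so that $T=U^{*}$. Since $X$ is reflexive, dualizing once more gives $T^{*}=U$. Two facts are then essentially free: the Bessel upper bound $\|T\|\le B$ yields the upper Riesz inequality in $(\ref{qRiesz})$ for \emph{every} $p$-Bessel sequence, and the $p$-frame lower bound $A\|f\|_X\le\|Uf\|_{l^p}$ shows $U$ is bounded below, hence injective with closed range. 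Thus the whole content of being a $q$-Riesz basis reduces to the lower bound plus the spanning property, and these are what the duality will produce.

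First I would dispatch the easy implications. For $(1)\Rightarrow(2)$, the lower Riesz bound in $(\ref{qRiesz})$, stated for finite sequences, extends by continuity of $T$ and of the $l^q$-norm to all of $l^q$, so $A\|(d_i)\|_{l^q}\le\|T(d_i)\|_{X^*}$; hence $T$ is injective and $\sum d_i g_i=0$ forces $(d_i)=0$. The main work is $(2)\Rightarrow(1)$. Condition $(2)$ says precisely that $T=U^{*}$ is injective, which is equivalent to $\overline{\mathrm{range}(U)}=l^p$ via $\ker(U^{*})=(\mathrm{range}\,U)^{\perp}$ and Hahn--Banach. Combining this dense range with the closed range coming from the frame lower bound shows $U$ is \emph{surjective}, and being also bounded below, $U$ is an isomorphism of $X$ onto $l^p$. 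Taking adjoints, $T=U^{*}$ is then an isomorphism of $l^q$ onto $X^{*}$, which simultaneously supplies the lower bound in $(\ref{qRiesz})$ and the surjectivity giving $\overline{\mathrm{span}}\{g_i\}=X^{*}$; so $(g_i)$ is a $q$-Riesz basis.

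It remains to weave in $(3)$. For $(3)\Rightarrow(2)$, if $(d_i)\in l^q$ with $\sum_i d_i g_i=0$, evaluating this functional at $f_j$ and using unconditional convergence to pass the evaluation inside the sum gives $0=\sum_i d_i g_i(f_j)=\sum_i d_i\delta_{i,j}=d_j$, so $(d_i)=0$. For $(1)\Rightarrow(3)$, I would use that under $(1)$ the operator $U$ is an isomorphism onto $l^p$ (established above) and set $f_j=U^{-1}(e_j)$, where $(e_j)$ denotes the unit-vector basis of $l^p$; then $g_i(f_j)=(Uf_j)_i=(e_j)_i=\delta_{i,j}$, so $(f_j)\subset X$ is the required biorthogonal system. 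Running $(1)\Rightarrow(2)$, $(2)\Rightarrow(1)$, $(1)\Rightarrow(3)$ and $(3)\Rightarrow(2)$ closes the chain of equivalences.

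I expect the genuine obstacle to lie in the step \textbf{closed range $+$ dense range $\Rightarrow$ surjectivity}, together with the claim that the adjoint of a Banach-space isomorphism is again an isomorphism; both rest on the standard annihilator identities for adjoints and, crucially, on the reflexivity assumption that allows the clean identification $T=U^{*}$ and $T^{*}=U$. Everything else is bookkeeping: keeping track of which inequalities are automatic from the Bessel bound and which require condition $(2)$, and justifying the termwise evaluation of the unconditionally convergent series $\sum_i d_i g_i$.
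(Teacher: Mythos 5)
Your proof is correct. Note first that the paper offers no proof of this proposition at all: it is quoted from the reference \cite{chst03} (Christensen--Stoeva), so there is no internal argument to compare yours against; what you have written is essentially the standard duality proof used in that cited source. Your key steps all check out: $T=U^{*}$ holds because $T(d)(f)=\sum_i d_i g_i(f)=\langle (d_i),Uf\rangle$ (the series converging by Proposition \ref{sec:besselbound1}); the $p$-frame lower bound makes $U$ bounded below, hence injective with closed range; condition (2) is precisely injectivity of $T=U^{*}$, which by Hahn--Banach is density of the range of $U$, and closed plus dense range forces $U$ to be an isomorphism onto $l^p$, whence $T$ is an isomorphism onto $X^{*}$, giving both the lower Riesz bound and $\overline{\mathrm{span}}\{g_i\}=X^{*}$. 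The remaining implications ($(1)\Rightarrow(2)$ by extending the finite-sequence inequality to $l^q$ by continuity, $(1)\Rightarrow(3)$ via $f_j=U^{-1}e_j$, and $(3)\Rightarrow(2)$ by evaluating the norm-convergent series at $f_j$) are sound, and the chain $(1)\Rightarrow(2)\Rightarrow(1)$, $(1)\Rightarrow(3)\Rightarrow(2)$ closes the equivalence. One small correction: reflexivity is not needed where you claim it is crucial. The identification $T=U^{*}$ and the equivalence $\ker U^{*}=\{0\}$ iff $\mathrm{range}(U)$ dense hold in arbitrary Banach spaces; reflexivity would only matter if you needed $T^{*}=U$, which your argument never actually uses. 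This is harmless here, since the paper assumes reflexivity throughout anyway, but it is worth stating the dependencies accurately.
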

\begin{thm} \label{sec:dualriesz1}\cite{chst03}
Let $(g_{i})_{i\in I}\subset X^*$ be a $q$-Riesz basis for $X^*$
with bounds $A,B$. Then there exists a unique $p$-Riesz basis
$(f_{i})_{i\in I}\subset X$ for which
$$ f = \sum \limits_i g_i (f) f_i\quad\quad \mbox{ and }\quad\quad g = \sum \limits_i f_i(g) g_i$$
for all $f \in X $ and $ g \in X^*$. The bounds of $(f_{i})_{i\in
I}$ are  $1/B$ and $1/A$, and it is biorthogonal to $( g_i)$.
\end{thm}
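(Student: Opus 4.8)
The plan is to realise both sequences through a single isomorphism and its adjoint, exploiting the reflexivity of $X$. First I would observe that, by Proposition \ref{sec:besselbound1}, the synthesis operator $T : l^q \to X^*$, $T((d_i)) = \sum_i d_i g_i$, is bounded with $\|T\| \le B$; the lower $q$-Riesz bound $A(\sum_i |d_i|^q)^{1/q} \le \|T((d_i))\|_{X^*}$ shows that $T$ is bounded below, hence injective with closed range, and the basis condition $\overline{\mathrm{span}}\{g_i\} = X^*$ forces that range to be all of $X^*$. Thus $T$ is an isomorphism of $l^q$ onto $X^*$ with $\|T^{-1}\| \le 1/A$.

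Next, since $X$ is reflexive we may identify $X^{**} = X$ and $(l^q)^* = l^p$, so the adjoint acts as $T^* : X \to l^p$. For $f \in X$ and $(d_i) \in l^q$ a direct computation gives $(T^* f)((d_i)) = (T((d_i)))(f) = \sum_i d_i\, g_i(f)$, so $T^* f = (g_i(f))_i$; that is, $T^*$ is precisely the analysis operator $U$ of the $p$-Bessel sequence $(g_i)$. Being the adjoint of an isomorphism, $U = T^*$ is itself an isomorphism of $X$ onto $l^p$, with $\|U\| \le B$ and $\|U^{-1}\| \le 1/A$.

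Now I would \emph{define} $f_i := U^{-1}(e_i) \in X$, where $(e_i)$ is the canonical unit-vector basis of $l^p$. Biorthogonality is then immediate: $g_j(f_i) = (U f_i)_j = (e_i)_j = \delta_{i,j}$. For the first reconstruction formula, note $U f = (g_i(f))_i = \sum_i g_i(f) e_i$, the series converging unconditionally in $l^p$ because $1 < p < \infty$; applying the bounded operator $U^{-1}$ yields $f = \sum_i g_i(f) f_i$ with unconditional convergence. For the dual formula, each $g \in X^*$ is uniquely $g = \sum_i c_i g_i$ with $(c_i) = T^{-1} g \in l^q$, and pairing with $f_j$ through biorthogonality gives $c_j = g(f_j) = f_j(g)$, hence $g = \sum_i f_i(g) g_i$.

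Finally, the $p$-Riesz bounds come from writing $\sum_i c_i f_i = U^{-1}((c_i))$ for finite $(c_i)$: the estimate $\|U^{-1}((c_i))\| \le \|U^{-1}\|\,(\sum_i |c_i|^p)^{1/p} \le \tfrac{1}{A}(\sum_i |c_i|^p)^{1/p}$ gives the upper bound $1/A$, while $(\sum_i |c_i|^p)^{1/p} = \|U U^{-1}((c_i))\| \le \|U\|\,\|U^{-1}((c_i))\| \le B\,\|\sum_i c_i f_i\|$ gives the lower bound $1/B$; density of $\mathrm{span}(e_i)$ in $l^p$ together with surjectivity of $U^{-1}$ makes $\overline{\mathrm{span}}\{f_i\} = X$, so $(f_i)$ is a $p$-Riesz basis. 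Uniqueness follows from the dual formula: if $(f_i')$ were another such basis, then $\sum_i (f_i(g) - f_i'(g)) g_i = 0$ for every $g \in X^*$, and injectivity of $T$ (equivalently, condition (2) of Proposition \ref{ri}) forces $f_i = f_i'$. I expect the only genuinely delicate point to be the identification $T^* = U$: it is where reflexivity is essential, so that $T^*$ lands in $X$ rather than $X^{**}$ and the duality $(l^q)^* = l^p$ is available, and all the remaining bound bookkeeping flows from it.
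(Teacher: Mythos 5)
The paper itself offers no proof of this theorem: it is quoted directly from \cite{chst03}, so there is no in-paper argument to compare against. Your proof is correct, and it is essentially the standard argument (the same route as in \cite{chst03}): the lower $q$-Riesz bound plus density of $\mathrm{span}\{g_i\}$ makes the synthesis operator $T : l^q \to X^*$ an isomorphism with $\|T\|\le B$, $\|T^{-1}\|\le 1/A$; reflexivity identifies $T^*$ with the analysis operator $U : X \to l^p$; and pulling back the unit vector basis of $l^p$ under $U^{-1}$ produces the dual basis together with biorthogonality, both reconstruction formulas, and the bounds $1/B$, $1/A$. One point you should make explicit in the uniqueness step: Proposition \ref{ri}(2) only applies to coefficient sequences in $l^q$, so you need $\left(f_i(g)-f_i'(g)\right)_i \in l^q$. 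For your constructed $(f_i)$ this holds since $\left(g(f_i)\right)_i = (U^{-1})^*g \in l^q$; for a competing basis $(f_i')$ it follows because the partial sums of the convergent series $\sum_i f_i'(g)\, g_i$ are bounded, and the lower $q$-Riesz inequality then forces $\left(f_i'(g)\right)_i \in l^q$. With that sentence added, the argument is complete.
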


\begin{defn} We will call the unique sequence of Theorem \ref{sec:dualriesz1} {\em the dual} of $(g_i)$ and denote it by $( \tilde g_i)$.
\end{defn}
\subsection{Perturbation of $p$-Bessel sequences}
Similar to the case for Hilbert spaces  perturbation results for
Banach spaces are possible.
\begin{thm}  \label{sec:perturboper1} \cite{cach97}
Let $U : X \rightarrow Y$ be a bounded operator, $X_0$ a dense
subspace of $X$ and $V: X \rightarrow Y$ a linear mapping. If for
$\lambda_1, \mu > 0$ and $0 \le \lambda_2 < 1$
$$ \left\| U x - V x \right\| \le \lambda_1 \left\|U x \right\| + \lambda_2 \left\| V x \right\| + \mu \left\|x\right\|, $$
for all $x \in X_0$, then $V$ is a bounded linear operator.
\end{thm}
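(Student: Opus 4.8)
The plan is to treat the given inequality as an algebraic constraint that can be solved for $\|Vx\|$, and then to promote the resulting estimate from the dense subspace $X_0$ to all of $X$ by a density argument. Since the linearity of $V$ is already part of the hypothesis, the only thing left to establish is a norm bound of the form $\|Vx\| \le C\|x\|$.

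First I would write $Vx = Ux - (Ux - Vx)$ and apply the triangle inequality on $X_0$:
$$ \|Vx\| \le \|Ux\| + \|Ux - Vx\| \le (1+\lambda_1)\|Ux\| + \lambda_2\|Vx\| + \mu\|x\|. $$
Because $0 \le \lambda_2 < 1$, the term $\lambda_2\|Vx\|$ can be absorbed into the left-hand side, giving $(1-\lambda_2)\|Vx\| \le (1+\lambda_1)\|Ux\| + \mu\|x\|$ with $1-\lambda_2 > 0$. Dividing through and using the boundedness of $U$, that is $\|Ux\| \le \|U\|\,\|x\|$, yields
$$ \|Vx\| \le \frac{(1+\lambda_1)\|U\| + \mu}{1-\lambda_2}\,\|x\| =: C\|x\| \qquad (x \in X_0). $$
This is the core estimate, and it is entirely routine; the only point to watch is keeping $\lambda_2$ strictly below $1$ so that the division is legitimate, which is exactly why the hypothesis insists on $0 \le \lambda_2 < 1$.

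The step I expect to be the main obstacle is passing from the bound on the dense subspace $X_0$ to a bound valid on all of $X$. Here I would argue as follows: given $x \in X$, choose $(x_n) \subseteq X_0$ with $x_n \to x$; since $x_n - x_m \in X_0$ and $V$ is linear, the estimate gives $\|Vx_n - Vx_m\| \le C\|x_n - x_m\|$, so $(Vx_n)$ is Cauchy and hence convergent in the complete space $Y$, and the uniform bound $\|Vx_n\| \le C\|x_n\|$ passes to the limit. This produces a bounded linear extension of $V|_{X_0}$ to $X$ with operator norm at most $C$. The genuinely delicate point is the identification of this extension with the given map $V$: a linear map that is bounded on a dense subspace need not be bounded on the whole space unless it is known to be continuous there, so the conclusion really rests on interpreting $V$ as (or showing it agrees with) the unique bounded extension of $V|_{X_0}$. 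I would therefore devote the bulk of the write-up to this extension and identification, since the preceding manipulation is a one-line computation while this is where all the functional-analytic bookkeeping lives.
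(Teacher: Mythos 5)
The paper itself offers no proof of this theorem---it is quoted directly from Casazza--Christensen \cite{cach97}---so your attempt has to be judged on its own terms rather than against an in-paper argument. Your core estimate is correct and is indeed the heart of the standard proof: since $\|Vx\|$ is a finite number for each fixed $x \in X_0$, the absorption $(1-\lambda_2)\|Vx\| \le (1+\lambda_1)\|Ux\| + \mu\|x\|$ is legitimate, giving $\|Vx\| \le C\|x\|$ on $X_0$ with $C = \bigl((1+\lambda_1)\|U\| + \mu\bigr)/(1-\lambda_2)$, and the density construction of a bounded extension $\widetilde{V}$ of $V|_{X_0}$ is routine, exactly as you describe.

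The genuine gap is the step you defer to ``the bulk of the write-up'': identifying $\widetilde{V}$ with the given map $V$. This is not functional-analytic bookkeeping that can eventually be carried out; it is impossible, because with the hypotheses as literally stated the conclusion is false. Take $X = Y = \ell^2$, $X_0 = c_{00}$ (the finitely supported sequences), $U$ the identity, and let $V$ be any linear map on $\ell^2$ that equals the identity on $c_{00}$ but is unbounded: extend the standard basis of $c_{00}$ to a Hamel basis of $\ell^2$ and send countably many of the added basis vectors $b_k$ to $k\|b_k\|e_1$. Then $\|Ux - Vx\| = 0$ for all $x \in X_0$, so the inequality holds for every admissible choice of $\lambda_1, \lambda_2, \mu$, yet $V$ is unbounded on $X$. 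What is true---and what \cite{cach97} actually asserts---is that a linear map $V$ defined \emph{on $X_0$} and satisfying the inequality there extends uniquely to a bounded operator on $X$; the statement's ``$V : X \rightarrow Y$'' is a sloppy rendering of ``$V : X_0 \rightarrow Y$,'' and the conclusion must be read as the existence and boundedness of the extension $\widetilde{V}$, not boundedness of an a priori globally defined $V$. This reading is also exactly how the theorem is used in Corollary \ref{sec:framesimi1}: there the role of $V$ is played by the synthesis (or analysis) operator of the perturbed sequence, defined initially only on finitely supported sequences, and Proposition \ref{sec:besselbound1} then converts boundedness of the extension into the Bessel property. So your write-up should end not with an identification argument but with the construction of $\widetilde{V}$ and an explicit remark that the theorem's conclusion concerns $\widetilde{V}$, i.e.\ $V$ restricted to $X_0$.
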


\begin{cor} \label{sec:framesimi1} Let $(\psi_k) \subseteq X^*$ be a $p$-Bessel sequence. 
\begin{enumerate}
\item  If $(\phi_k) \subseteq X^*$ is a sequence with $\left( \sum \limits_k \norm{X^*}{\psi_k - \phi_k}^p\right)^{1/p} < \mu <\infty$,
 then $(\phi_k)$ is a $p$-Bessel sequences with bound $B + \mu$. 
\item
Let $(\phi_k^{(l)})$ be a sequence such that for all $\varepsilon$
there exists an $N_\varepsilon$ with $\left( \sum \limits_k
\norm{X^*}{\psi_k - \phi_k^{(l)}}^p\right)^{1/p} < \varepsilon$ for
all $l \ge N_\epsilon$. Then the sequence $(\phi_k^{(l)})$ is a
Bessel sequence and for all $l \ge N_{\epsilon}$
$$ \norm{Op}{U_{\phi_k^{(l)}} - U_{(\psi_k)}}
< \varepsilon \quad and\quad  \norm{Op}{T_{\phi_k^{(l)}} -
T_{(\psi_k)}} < \varepsilon .$$
\end{enumerate}
\end{cor}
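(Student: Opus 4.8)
My plan is to reduce both parts to the triangle inequality in $l^p$ together with the pointwise estimate $|(\phi_k - \psi_k)(f)| \le \norm{X^*}{\phi_k - \psi_k}\,\norm{X}{f}$. For part (1), fix $f \in X$ and split $\phi_k(f) = \psi_k(f) + (\phi_k - \psi_k)(f)$. Minkowski's inequality in $l^p$, applied to $(\psi_k(f))_k$ and $((\phi_k - \psi_k)(f))_k$, gives
\[ \left(\sum_k |\phi_k(f)|^p\right)^{1/p} \le \left(\sum_k |\psi_k(f)|^p\right)^{1/p} + \left(\sum_k |(\phi_k - \psi_k)(f)|^p\right)^{1/p}. \]
The first summand is at most $B\norm{X}{f}$ by the Bessel property of $(\psi_k)$, while the pointwise estimate bounds the second by $\norm{X}{f}\left(\sum_k \norm{X^*}{\phi_k - \psi_k}^p\right)^{1/p} < \mu\,\norm{X}{f}$. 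Hence $\left(\sum_k |\phi_k(f)|^p\right)^{1/p} \le (B + \mu)\norm{X}{f}$, which is precisely the Bessel condition with bound $B + \mu$. (Boundedness alone would also follow from Theorem \ref{sec:perturboper1} applied with $U = U_{(\psi_k)}$, $V = U_{(\phi_k)}$ and $X_0 = X$; it is the Minkowski step that pins down the explicit constant.)

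For part (2), apply part (1) with $\mu = \varepsilon$ to conclude that each $(\phi_k^{(l)})$ with $l \ge N_\varepsilon$ is a $p$-Bessel sequence. The structural observation is that the analysis and synthesis maps depend linearly on the defining sequence, so $U_{\phi_k^{(l)}} - U_{(\psi_k)}$ and $T_{\phi_k^{(l)}} - T_{(\psi_k)}$ are exactly the analysis and synthesis operators of the difference sequence $(\phi_k^{(l)} - \psi_k)$. Writing $C_l := \left(\sum_k \norm{X^*}{\phi_k^{(l)} - \psi_k}^p\right)^{1/p}$, the same pointwise estimate yields $\left(\sum_k |(\phi_k^{(l)} - \psi_k)(f)|^p\right)^{1/p} \le C_l\,\norm{X}{f}$ for every $f$, so the difference sequence is $p$-Bessel with bound $C_l$, and $C_l < \varepsilon$ whenever $l \ge N_\varepsilon$. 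The analysis estimate is then immediate, $\norm{Op}{U_{\phi_k^{(l)}} - U_{(\psi_k)}} \le C_l < \varepsilon$, and the synthesis estimate follows from Proposition \ref{sec:besselbound1}, which bounds the norm of a synthesis operator by the Bessel bound of its sequence, giving $\norm{Op}{T_{\phi_k^{(l)}} - T_{(\psi_k)}} \le C_l < \varepsilon$.

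The computations are routine; the one point that repays a little care is the synthesis estimate. Rather than bounding $\norm{X^*}{\sum_i d_i(\phi_i^{(l)} - \psi_i)}$ term by term and applying H\"older's inequality by hand, it is cleaner to recognise $T_{\phi_k^{(l)}} - T_{(\psi_k)}$ as the synthesis operator of a genuine $p$-Bessel sequence and to quote Proposition \ref{sec:besselbound1}. This keeps the unconditional convergence of the series and the dependence of the operator norm on the $l^p$-size of the perturbation transparent, and spares us from re-establishing that the synthesis map is well defined.
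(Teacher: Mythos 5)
Your proof is correct, but it takes a genuinely different route from the paper's in part (1). The paper works on the synthesis side: it estimates $\norm{}{T_{\Psi}c - T_{\Phi}c} \le \mu \norm{q}{c}$ for finitely supported $c$ via H\"older's inequality, estimates $\norm{}{U_{\Psi}f - U_{\Phi}f} \le \mu \norm{X}{f}$, and then invokes the Casazza--Christensen perturbation theorem (Theorem \ref{sec:perturboper1}, with $\lambda_1 = \lambda_2 = 0$ and $X_0$ the finitely supported sequences, which are dense in $\ell^q$) together with Proposition \ref{sec:besselbound1} to conclude that $T_{\Phi}$ is well defined and bounded, hence that $(\phi_k)$ is a $p$-Bessel sequence; the bound $B+\mu$ then comes from the triangle inequality. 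You instead verify the defining $p$-Bessel inequality directly on the analysis side via Minkowski's inequality in $l^p$, which is more elementary: no density argument, no perturbation theorem, and the constant $B+\mu$ falls out immediately. (Your parenthetical remark is fair: Theorem \ref{sec:perturboper1} as stated only asserts boundedness, so the explicit constant has to be extracted separately in the paper's route as well.) For part (2) the paper merely says it can be ``proved in an analogue way''; your observation that $U_{\phi_k^{(l)}} - U_{(\psi_k)}$ and $T_{\phi_k^{(l)}} - T_{(\psi_k)}$ are exactly the analysis and synthesis operators of the difference sequence $(\phi_k^{(l)} - \psi_k)$, which is itself $p$-Bessel with bound $C_l < \varepsilon$, makes this precise cleanly --- and indeed the paper's two displayed estimates are precisely these operator-norm bounds, computed by hand with H\"older rather than quoted from Proposition \ref{sec:besselbound1}. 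What the paper's route buys is an illustration of the general perturbation machinery, which also tolerates nonzero $\lambda_1, \lambda_2$ and so covers perturbations that are not small in the $l^p$-difference sense; what your route buys is a self-contained argument whose every constant is explicit.
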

\begin{proof} For any $c\in \ell^{p} $ with finite support, we have
\begin{eqnarray*} \left\| T_{\Psi} c - T_{\Phi} c \right\|&=& \left\| \sum \limits_i c_i \left( \psi_i - \phi_i \right)
\right\|\\ &\le&  \sum \limits_i \left| c_i \right| \left\| \psi_i -
\phi_i \right\|
\\ &\le& \left(\sum \limits_i \left| c_i \right|^q  \right)^{1/q}  \cdot \left( \sum \limits_i \left\| \psi_i - \phi_i \right\|^p\right)^{1/p} \\ &\le& \left\| c \right\|_q \mu .
\end{eqnarray*}
Furthermore
\begin{eqnarray*} \left\| U_{\Psi} f - U_{\Phi} f \right\| &=& \left\|( \psi_i(f) - \phi_i(f)) \right\|_p \\ &=&
 \left( \sum \limits_i \left| \psi_i(f) - \phi_i(f) \right|^p\right)^{1/p} \\ &\le&
\left( \sum \limits_i \left\| \psi_i - \phi_i
\right\|_{X^*}^p\right)^{1/p} \norm{X}{f}
\\ &\le& \mu  \norm{X}{f}.
\end{eqnarray*}
We can apply Theorem \ref{sec:perturboper1} for $\lambda_1 =
\lambda_2 = 0$ and use Proposition \ref{sec:besselbound1}.

Part (2) can be proved in an analogue way.
\end{proof}
For a full treatment of perturbation for frames and Bessel sequences
in Banach spaces refer to \cite{stoeva09}.

\begin{defn} Let $(\psi_k)_{k \in K} \subseteq X^*$ and $(\psi_k^{(l)})_{k \in K} \subseteq X^*$ be a sequence of elements for all $l \in \NN$.
 The sequences $(\psi_k^{(l)})$ are said to \em converge to $(\psi_k)$ in an $l^p$-sense\em , denoted
  by $(\psi_k^{(l)}) \stackrel{l^p}{\longrightarrow} (\psi_k)$, if for any $\varepsilon > 0$ there exists $N_\varepsilon > 0$
  such that  $\left(\sum \limits_k \|\psi_k^{(l)} - \psi_k\|_{X^*}^p\right)^{\frac{1}{p}} < \varepsilon$, for all
   $\ l \ge N_\varepsilon$.
\end{defn}
This is related to the notions of `quadratic closeness'
\cite{young1}, and  `Bessel norm'\cite{xxlmult1}.

\section{Multipliers for $p$-Bessel sequences}\label{sec:multban0}
\begin{lem} \label{sec:multopinf1}
Let $(\psi_k) \subseteq X_1^*$ be a  $p$-Bessel sequence for $X_1$
with bound $B_1$, let $(\phi_k) \subseteq X_2$ be a $q$-Bessel
sequence for $X_2^*$ with bound $B_2$, let $m \in l^\infty$.
The operator ${\bf M}_{m, ( \phi_k), (\psi_k)} : X_1 \rightarrow
X_2$ defined by $$ {\bf M}_{m, (\phi_k), ( \psi_k )} (f)  =  \sum
\limits_k m_k \psi_k (f)\phi_k .$$ is well defined. This sum
converges unconditionally and
$$\norm{Op}{\bf M} \le  B_2 B_1 \cdot
\norm{\infty}{m} .$$
\end{lem}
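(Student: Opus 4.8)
The plan is to recognize the multiplier as a composition of three bounded operators and then estimate its norm by submultiplicativity. Concretely, I would write ${\bf M}_{m,(\phi_k),(\psi_k)} = T \circ D_m \circ U$, where $U$ is the analysis operator of the $p$-Bessel sequence $(\psi_k)$, $D_m$ is multiplication by the symbol on a sequence space, and $T$ is the synthesis operator of $(\phi_k)$.

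First I would set up the three factors. Since $(\psi_k) \subseteq X_1^*$ is a $p$-Bessel sequence for $X_1$, the analysis operator $U : X_1 \rightarrow l^p$, $U(f) = (\psi_k(f))$, is bounded with $\norm{Op}{U} \le B_1$, as noted after its definition in the preliminaries. The diagonal map $D_m : l^p \rightarrow l^p$, $D_m((c_k)) = (m_k c_k)$, is evidently bounded with $\norm{Op}{D_m} \le \norm{\infty}{m}$, since $m \in l^\infty$. For the synthesis operator I would invoke Proposition \ref{sec:besselbound1}, but applied with the roles of $p$ and $q$ (and of a space and its dual) interchanged: as $(\phi_k) \subseteq X_2 = (X_2^*)^*$ is a $q$-Bessel sequence for $X_2^*$, the proposition provides a well-defined bounded synthesis operator $T : l^p \rightarrow X_2$, $T((d_k)) = \sum_k d_k \phi_k$, with $\norm{Op}{T} \le B_2$, and moreover this series converges unconditionally for every $(d_k) \in l^p$.

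The key step is then the identity $D_m(U(f)) = (m_k \psi_k(f)) \in l^p$, whence
$$ {\bf M}_{m,(\phi_k),(\psi_k)}(f) = \sum_k m_k \psi_k(f)\,\phi_k = T\big(D_m(U(f))\big). $$
This single observation settles everything at once: the defining series is the synthesis of an $l^p$ sequence, so it converges unconditionally by Proposition \ref{sec:besselbound1}; the map ${\bf M}$ is well defined and bounded as a composition of bounded operators; and the norm estimate drops out of submultiplicativity,
$$ \norm{Op}{\bf M} \le \norm{Op}{T}\,\norm{Op}{D_m}\,\norm{Op}{U} \le B_2\,\norm{\infty}{m}\,B_1. $$
The one point demanding care is the duality bookkeeping for $(\phi_k)$: I must verify that being a $q$-Bessel sequence for $X_2^*$ is precisely the hypothesis that, after the $p \leftrightarrow q$ swap, makes Proposition \ref{sec:besselbound1} yield a synthesis operator defined on $l^p$ rather than $l^q$, which is exactly the domain needed for the composition $T \circ D_m \circ U$ to be meaningful. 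Reflexivity of $X_2$ is what ensures $(X_2^*)^* = X_2$, so that the synthesized vectors land back in $X_2$. Once this is in place, everything else is routine.
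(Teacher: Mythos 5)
Your proof is correct, and for the norm bound it takes a genuinely different route from the paper's own proof. The first half coincides: the paper, too, obtains well-definedness and unconditional convergence by noting that $(m_k\psi_k(f)) \in l^p$ and invoking Proposition \ref{sec:besselbound1} for the synthesis operator of $(\phi_k)$, with exactly the $p \leftrightarrow q$ swap and reflexivity bookkeeping you spell out. For the estimate $\norm{Op}{\MM} \le B_1 B_2 \norm{\infty}{m}$, however, the paper computes directly on partial sums, bounding $\left\|\sum_{k=1}^{n} m_k \psi_k(f)\phi_k\right\|_{X_2}$ via H\"older's inequality and the duality $\|\phi\|_{X_2}=\sup_{\|h\|_{X_2^*}\le 1}|\phi(h)|$, whereas you obtain it from submultiplicativity applied to the factorization $\MM = T_{\phi_k} D_m U_{\psi_k}$. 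Notably, the paper itself remarks immediately after its proof that this representation ``gives a more direct way to prove the above bound,'' and the same decomposition is reused later for the nuclearity result (Corollary \ref{multrpg}); so you have written out in full the argument the authors only gesture at (their remark even places $D_m$ on $\ell^\infty$, while your $D_m : l^p \rightarrow l^p$ is the reading that makes the composition meaningful). Your route buys economy and safety: all three conclusions fall out of one identity, and you avoid the paper's opening triangle-inequality step $\|\sum_k m_k\psi_k(f)\phi_k\| \le \sum_k \|m_k\psi_k(f)\phi_k\|$, which as written does not mesh with the H\"older-plus-duality line that follows it (summing term-by-term norms would produce the factor $\left(\sum_k \|\phi_k\|_{X_2}^q\right)^{1/q}$, which need not be bounded by $B_2$). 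What the paper's computation buys in exchange is an explicit, elementary display of where H\"older's inequality and the two Bessel bounds enter, with no appeal to operator-norm algebra.
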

\begin{proof}
As $\forall f \in X_1$ $(m_k \cdot \psi_k (f) ) \in l^p$, ${\bf M}$
converges unconditionally and is well defined by Proposition
\ref{sec:besselbound1}.

For $n>0$ we have
\begin{eqnarray*}
\|\sum_{k=1}^{n}m_{k}\psi_k (f)\phi_k \|_{X_2}& \le & \sum_{k=1}^{n
} \| m_{k} \psi_k (f)\phi_k
\|_{X_2}\\
&\leq&\|m\|_{\infty}\left(\sum_{k=1}^{n}|\psi_k
(f)|^{p}\right)^{\frac{1}{p}}\sup_{\|h\|\leq 1}\left(\sum_{k=1}^{n}|\phi_k(h)|^{q}\right)^{\frac{1}{q}}\\
&\leq&\|m\|_{\infty} \cdot B_1\|f\|_{X_1}.\sup_{\|h\|=1}(B_2\|h\|_{X^*_2})\\
&=&\|m\|_{\infty} \cdot B_1 \cdot B_2\|f\|_{X_1}.
\end{eqnarray*}
So the multiplier is bounded with bound $\|m\|_{\infty}\cdot  B_1
\cdot B_2$.

\end{proof}

Using the representation ${\bf M}_{m, (\phi_k), ( \psi_k )} =
T_{\phi_k} D_m U_{\psi_k}$ gives a more direct way to prove the
above bound. Here $D_m$ is the diagonal operator on $\ell^\infty$
defined by $D_m(\xi_i)=(m_i \xi_i).$

 Using the above Lemma, we can define:
\begin{defn} Let $(\psi_k) \subseteq X_1^*$ be a  $p$-Bessel sequence for
$X_1$ and let $(\phi_k) \subseteq X_2$ be a $q$-Bessel sequence for
$X_2^*$.
 Let $m \in l^\infty$. 
  The
operator ${\bf M}_{m, ( \phi_k), (\psi_k)} : X_1 \rightarrow X_2$,
defined by
$$ {\bf M}_{m, (\phi_k), ( \psi_k )} (f)  =  \sum \limits_k m_k \psi_k
(f)\phi_k$$ is called \em {(p,q)-Bessel multiplier}\em. The sequence
$m$ is called the \em symbol \em of $\bf M$.
\end{defn}
\begin{prop}
Let $(\psi_k) \subseteq X_1^*$ be a  $p$-Bessel sequence for $X_1$
with no zero elements, let $(\phi_k) \subseteq X_2$ be a $p$-Riesz
sequence for $X_2$ and let $m \in l^\infty$. Then the mapping
$$m\rightarrow {\bf M}_{m, ( \phi_k), (\psi_k)}$$ is injective
from $l^\infty$ into $\mathcal{B}(X_1,X_2)$.
\end{prop}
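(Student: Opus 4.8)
The plan is to exploit the linearity of the assignment $m \mapsto \mathbf{M}_{m,(\phi_k),(\psi_k)}$ in the symbol, which reduces injectivity to triviality of the kernel: it suffices to show that $\mathbf{M}_{m,(\phi_k),(\psi_k)} = 0$ forces $m = 0$. So I would assume
$$\mathbf{M}_{m,(\phi_k),(\psi_k)}(f) = \sum_k m_k \psi_k(f)\phi_k = 0 \qquad \text{for all } f \in X_1,$$
and aim to separate the coordinates $m_k$ using the lower Riesz bound of $(\phi_k)$.

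First I would observe that for each fixed $f$ the coefficient sequence $d^f := (m_k \psi_k(f))_k$ lies in $l^p$: the $p$-Bessel hypothesis gives $(\psi_k(f)) \in l^p$, and $|m_k \psi_k(f)| \le \|m\|_\infty\,|\psi_k(f)|$ since $m \in l^\infty$. Writing $T_\phi$ for the synthesis operator of $(\phi_k)$, the vanishing hypothesis reads $T_\phi(d^f) = 0$ for every $f \in X_1$. The crux of the argument is then to promote the lower Riesz inequality from finitely supported sequences to all of $l^p$. The upper bound in \eqref{qRiesz} (with exponent $p$) shows that $T_\phi$ is bounded on finitely supported sequences, hence extends to a bounded operator on $l^p$; the lower bound $A\|d\|_p \le \|T_\phi d\|_{X_2}$ then passes to an arbitrary $d \in l^p$ by approximating $d$ with its truncations and invoking the continuity of $T_\phi$ together with that of $\|\cdot\|_p$. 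Applying this to $d = d^f$ gives $A\|d^f\|_p \le \|T_\phi(d^f)\|_{X_2} = 0$, so $d^f = 0$; that is, $m_k \psi_k(f) = 0$ for every index $k$ and every $f \in X_1$.

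Finally I would fix an index $k$ and use the hypothesis that $(\psi_k)$ has no zero elements: choosing $f \in X_1$ with $\psi_k(f) \ne 0$ forces $m_k = 0$. Since $k$ is arbitrary, $m = 0$, which establishes injectivity. The only step demanding genuine care is the density argument extending the lower Riesz bound to all of $l^p$, since the Riesz inequality \eqref{qRiesz} is only postulated for finite scalar sequences; once that extension is in hand, the rest follows directly from the Bessel bound and the nonvanishing of the $\psi_k$.
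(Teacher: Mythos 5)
Your proposal is correct and follows essentially the same route as the paper: both arguments reduce the question to the uniqueness of $l^p$-coefficients with respect to the Riesz sequence $(\phi_k)$ (i.e., injectivity of the synthesis operator, coming from the lower bound in \eqref{qRiesz}), and then use the nonvanishing of each $\psi_k$ to conclude $m_k = 0$. The only difference is presentational: the paper compares two symbols $m, m'$ directly and cites the Riesz basis property of $\overline{span}\{\phi_k\}$ without comment, whereas you pass to the kernel via linearity and carefully carry the lower Riesz bound from finitely supported sequences to all of $l^p$ by density — a detail the paper leaves implicit.
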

\begin{proof} Suppose ${\bf M}_{m}={\bf M}_{m'}$, then $\sum \limits_k
m_k \psi_k (f)\phi_k=\sum \limits_k m'_k \psi_k (f)\phi_k$ for all
$f$. As $( \phi_k)$ is a $p$-Riesz basis for its span, $m_k \psi_k
(f)= m'_k \psi_k (f)$ for all $f,k$. For every $k$ there exists $f$
such that $\psi_k(f)\neq 0$, which implies that $m_{k}=m'_{k}$.
\end{proof}
\begin{prop}
Let $(\psi_k) \subseteq X_1^*$ be a  
$q$-Riesz basis for $X_1^*$ with bounds $A_1$ and $B_1$, let
$(\phi_k) \subseteq X_2$ be a $q$-frame for $X_2^*$ with bounds
$A_2$ and $B_2$ and let $m \in l^\infty$. Then
$$A_1 A_2\|m\|_{\infty}\leq \|{\bf M}_{m, (\phi_k), ( \psi_k )}\|_{Op}\leq B_1 B_2 \|m\|_{\infty}.$$
Particularly ${\bf M}$ is bounded if and only if $m$ is bounded.
\end{prop}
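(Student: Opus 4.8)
The plan is to prove the two inequalities by different mechanisms: the upper bound will fall out of the boundedness lemma once the hypotheses are rephrased, while the lower bound will exploit the biorthogonal dual that a Riesz basis (but not a general frame) provides.

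For the upper estimate I would first translate the hypotheses into the Bessel language demanded by Lemma \ref{sec:multopinf1}. Since $(\psi_k)$ is a $q$-Riesz basis for $X_1^*$ with upper bound $B_1$, its synthesis operator $T_\psi : l^q \to X_1^*$ satisfies $\|T_\psi\| \le B_1$, and by Proposition \ref{sec:besselbound1} this is exactly the statement that $(\psi_k)$ is a $p$-Bessel sequence for $X_1$ with bound $B_1$. Likewise, a $q$-frame for $X_2^*$ with upper bound $B_2$ is in particular a $q$-Bessel sequence for $X_2^*$ with bound $B_2$. Feeding these two facts into Lemma \ref{sec:multopinf1} yields $\|{\bf M}\|_{Op} \le B_1 B_2 \|m\|_\infty$ immediately.

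For the lower estimate I would invoke Theorem \ref{sec:dualriesz1}: let $(\tilde\psi_k) \subseteq X_1$ be the dual $p$-Riesz basis of $(\psi_k)$, so that $\psi_i(\tilde\psi_j) = \delta_{i,j}$, and since $(\tilde\psi_k)$ is a $p$-Riesz basis with bounds $1/B_1$ and $1/A_1$, the remark following Proposition \ref{ri} gives $\|\tilde\psi_j\|_{X_1} \le 1/A_1$. Evaluating the multiplier on a single dual vector collapses the defining sum through biorthogonality,
\[
{\bf M}(\tilde\psi_j) = \sum_k m_k\, \psi_k(\tilde\psi_j)\, \phi_k = m_j \phi_j ,
\]
so that for every index $j$
\[
\|{\bf M}\|_{Op} \ge \frac{\|{\bf M}\tilde\psi_j\|_{X_2}}{\|\tilde\psi_j\|_{X_1}} = \frac{|m_j|\,\|\phi_j\|_{X_2}}{\|\tilde\psi_j\|_{X_1}} \ge A_1\,|m_j|\,\|\phi_j\|_{X_2} .
\]
I would then pass to the supremum over $j$ and try to insert $\|\phi_j\|_{X_2} \ge A_2$ in order to reach $A_1 A_2 \|m\|_\infty$.

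The step I expect to be the real obstacle is precisely this lower bound $\|\phi_j\|_{X_2} \ge A_2$ on the individual synthesis vectors. For a Riesz basis the remark after Proposition \ref{ri} does furnish $A \le \|g_i\| \le B$, but a $q$-frame supplies only the upper estimate $\|\phi_j\| \le B_2$ coming from Proposition \ref{sec:besselbound1}; the frame axioms alone leave room for a single $\phi_j$ whose norm lies well below the lower frame bound $A_2$, so this inequality appears to require either that $(\phi_k)$ in fact be a Riesz basis or some extra normalisation of the frame, and this is where I would concentrate my scrutiny. Granting both inequalities, the final assertion is then immediate: a bounded symbol produces a bounded operator by the upper estimate (equivalently by Lemma \ref{sec:multopinf1}), while the lower estimate forces $\|{\bf M}\|_{Op} = \infty$ whenever $\|m\|_\infty = \infty$, which establishes the claimed equivalence.
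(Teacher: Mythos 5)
Your proposal retraces the paper's own proof: the upper bound is obtained from Lemma~\ref{sec:multopinf1} after rereading the hypotheses in Bessel terms (via Proposition~\ref{sec:besselbound1}), and the lower bound is obtained by evaluating ${\bf M}$ on the biorthogonal dual vectors supplied by Proposition~\ref{ri} and Theorem~\ref{sec:dualriesz1}, together with $\|\tilde\psi_j\|_{X_1}\le 1/A_1$. So there is no divergence of method. The step you refused to take on faith is exactly the step the paper performs silently: in its last line the paper passes from $\sup_j |m_j|\,\|\phi_j\|_{X_2}/\|f_j\|_{X_1}$ to $A_1A_2\|m\|_{\infty}$, which needs the elementwise bound $\|\phi_j\|_{X_2}\ge A_2$ for every $j$. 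Your scepticism is justified: that bound holds for $q$-Riesz sequences (apply the Riesz inequalities to the coordinate sequences, as in the remark following their definition), but it does not follow from the $q$-frame inequalities, since a frame may contain elements of arbitrarily small, even zero, norm without its frame bounds changing.

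Concretely, the lower inequality of the proposition fails as stated. Take $X_1=X_2=\ell^2$, $p=q=2$, let $(\psi_k)=(e_k)$ be the orthonormal basis, so $A_1=B_1=1$; let $\phi_1=0$ and $\phi_{k+1}=e_k$ for $k\ge 1$, which is a $2$-frame for $\ell^2$ with $A_2=B_2=1$; and let $m=(1,0,0,\dots)$. Then
\[
{\bf M}_{m,(\phi_k),(\psi_k)}f=m_1\langle f,e_1\rangle\,\phi_1=0 \quad\text{for all } f,
\]
so $\|{\bf M}\|_{Op}=0$ while $A_1A_2\|m\|_{\infty}=1$. (If you prefer to avoid zero vectors, $\phi_1=\varepsilon e_1$ gives $\|{\bf M}\|_{Op}=\varepsilon<1=A_1A_2\|m\|_{\infty}$, still with $A_2=1$.) So the obstacle you located is a genuine gap in the paper's own proof, and indeed in the statement; the same remark applies to the final ``if and only if'' claim, whose ``only if'' direction rests on the lower estimate. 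The proposition becomes true --- and both your argument and the paper's close --- if the hypothesis on $(\phi_k)$ is strengthened from $q$-frame to Riesz sequence, or if one simply adds the normalization $\inf_k\|\phi_k\|_{X_2}\ge A_2$; then your chain $\|{\bf M}\|_{Op}\ge A_1|m_j|\,\|\phi_j\|_{X_2}\ge A_1A_2|m_j|$ finishes the proof upon taking the supremum over $j$.
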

 \begin{proof} 
 Lemma \ref{sec:multopinf1}
 gives the upper
 bound.

  Proposition \ref{ri} states that
 $(\psi_k)$ has a biorthogonal sequence $(f_{i})\subseteq X_1$,
 i.e. $\psi_k(f_{i})=\delta_{k,i}$. $(f_{i})$ is also a Riesz basis with bounds $\frac{1}{B_1}, \frac{1}{A_1}$, and so
$\frac{1}{B_1}\leq\|f_{i}\|\leq\frac{1}{A_1}$ for all $i\in I$.
 For arbitrary $i\in I$, we have

\begin{eqnarray*}
\|{\bf M}\|_{op}= \sup_{f\in X_1}\frac{\|{\bf M}f\|}{\|f\|}\geq
\sup_{i\in I}\frac{\|{\bf M}f_{i}\|}{\|f_{i}\|} &=&
 \sup_{i\in I} \frac{\|\sum m_{k}\psi_k (f_{i})\phi_k
\|}{\|f_{i}\|}\\
&=& \sup_{i\in I} \frac{\|m_{i}\phi_i
\|}{\|f_{i}\|}\\
&=& \sup_{i\in I} |m_{i}|\frac{\|\phi_i \|}{\|f_{i}\|}\\
&\geq& A_1 A_2 \|m\|_{\infty}.
\end{eqnarray*}
So $ A_1 A_2\|m\|_{\infty}\leq \|{\bf M}\|_{op} .$
 \end{proof}
 The following proposition shows that under certain condition on $m$ the multiplier can be invertible\footnote{
 For a detailed study of invertible multiplier (on Hilbert spaces) see \cite{balsto09}.
 }, the inverse being the multiplier with the inverted symbol, similar to a result in \cite{xxljpa1} for Hilbert spaces.
 \begin{prop}
Let $(\psi_k) \subseteq X_1^*$ be a
$q$-Riesz basis for $X_1^*$, $(\phi_k) \subseteq X_2$ be a
$p$-Riesz basis for $ X_2^*$. Let
$m$ be semi-normalized (i.e. $0<\inf\mid m_k\mid\leq \mbox{$\sup\mid
m_k\mid$}<+\infty$). Then ${\bf M}_{m, (\phi_k), ( \psi_k )}$ is
invertible and $$({\bf M}_{m, (\phi_k), ( \psi_k )})^{-1}={\bf
M}_{(\frac{1}{m_k}), (\tilde \psi_k), ( \tilde \phi_k )}. $$
 \end{prop}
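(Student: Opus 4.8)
The plan is to show that $\mathbf{N} := \mathbf{M}_{(1/m_k),(\tilde\psi_k),(\tilde\phi_k)}$ is a genuine two‑sided inverse of $\mathbf{M} := \mathbf{M}_{m,(\phi_k),(\psi_k)}$, by computing the two compositions $\mathbf{N}\mathbf{M}$ and $\mathbf{M}\mathbf{N}$ directly and collapsing the resulting double sums with the biorthogonality and reconstruction identities furnished by Theorem \ref{sec:dualriesz1}. Before doing so I would record that, since $m$ is semi‑normalized, both $m$ and $(1/m_k)$ belong to $l^\infty$; hence Lemma \ref{sec:multopinf1} guarantees that $\mathbf{M}$ is a well‑defined bounded operator $X_1 \to X_2$. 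For the candidate inverse one must check that it too falls under Lemma \ref{sec:multopinf1}: its analysis sequence $(\tilde\phi_k) \subseteq X_2^*$ and its synthesis sequence $(\tilde\psi_k) \subseteq X_1$ are the dual Riesz bases provided by Theorem \ref{sec:dualriesz1}, and in particular are Bessel sequences carrying precisely the exponents that the Lemma requires, so that $\mathbf{N}$ is a well‑defined bounded operator $X_2 \to X_1$.

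From Theorem \ref{sec:dualriesz1} I would extract the biorthogonality relations $\psi_k(\tilde\psi_j) = \delta_{k,j}$ and $\tilde\phi_k(\phi_j) = \delta_{k,j}$, together with the reconstruction formulas $f = \sum_k \psi_k(f)\,\tilde\psi_k$ for every $f \in X_1$ and $\eta = \sum_k \tilde\phi_k(\eta)\,\phi_k$ for every $\eta \in X_2$. For the first composition, apply the bounded functional $\tilde\phi_k$ to $\mathbf{M}f = \sum_j m_j \psi_j(f)\phi_j$; by continuity it passes inside the (unconditionally convergent) sum, and biorthogonality reduces it to $\tilde\phi_k(\mathbf{M}f) = m_k \psi_k(f)$. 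Consequently
\[
\mathbf{N}\mathbf{M}f = \sum_k \frac{1}{m_k}\,\tilde\phi_k(\mathbf{M}f)\,\tilde\psi_k = \sum_k \frac{1}{m_k}\,m_k\psi_k(f)\,\tilde\psi_k = \sum_k \psi_k(f)\,\tilde\psi_k = f .
\]
Symmetrically, applying $\psi_j$ to $\mathbf{N}\eta = \sum_k \frac{1}{m_k}\tilde\phi_k(\eta)\tilde\psi_k$ and using $\psi_j(\tilde\psi_k) = \delta_{j,k}$ gives $\psi_j(\mathbf{N}\eta) = \frac{1}{m_j}\tilde\phi_j(\eta)$, whence
\[
\mathbf{M}\mathbf{N}\eta = \sum_j m_j\,\psi_j(\mathbf{N}\eta)\,\phi_j = \sum_j \tilde\phi_j(\eta)\,\phi_j = \eta .
\]
Both compositions being the identity map, $\mathbf{M}$ is invertible with $\mathbf{M}^{-1} = \mathbf{N}$, which is the asserted formula.

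I expect the only real friction to be the duality bookkeeping rather than the algebra. One has to make sure the dual sequences $(\tilde\psi_k)$ and $(\tilde\phi_k)$ are read in the correct spaces ($X_1$ and $X_2^*$, respectively) and carry exactly the Bessel exponents that Lemma \ref{sec:multopinf1} demands, so that $\mathbf{N}$ is bounded; and one has to justify pulling the continuous functionals $\tilde\phi_k$ and $\psi_j$ through the infinite sums, which is legitimate because every multiplier series converges unconditionally in norm by Proposition \ref{sec:besselbound1} and these functionals are bounded. Once these two points are settled, the collapse of the double sums via the Kronecker deltas and the two reconstruction formulas is purely routine.
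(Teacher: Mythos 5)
Your proof is correct and takes essentially the same route as the paper's: both note that semi-normalization puts $(\frac{1}{m_k})$ in $\ell^\infty$ so the candidate inverse is a well-defined multiplier, then collapse the composition by pulling the dual functionals through the unconditionally convergent sums and applying the biorthogonality relation $\tilde\phi_i(\phi_k)=\delta_{i,k}$ together with the reconstruction formulas of Theorem \ref{sec:dualriesz1}. The only difference is cosmetic: the paper computes one composition explicitly and declares the reverse one analogous, whereas you write out both.
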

\begin{proof}
It is clear that $(\frac{1}{m_k})\in \ell^{\infty}$ and thus ${\bf
M}_{(\frac{1}{m_k}), (\tilde \psi_k), ( \tilde \phi_k )}$ is
well-defined. For $f\in X_1$
\begin{eqnarray*}
{\bf M}_{(\frac{1}{m_k}), (\tilde \psi_k), ( \tilde \phi_k )} \circ
{\bf M}_{m, ( \phi_k), (\psi_k)}f&=& {\bf M}_{(\frac{1}{m_k}),
(\tilde \psi_k), ( \tilde \phi_k )}(\sum \limits_k m_k
\psi_k (f)\phi_k)\\
&=&\sum_{i}\frac{1}{m_{i}}\tilde\phi_i(\sum_k m_k \psi_k
(f)\phi_k)\tilde\psi_i\\&=&\sum_{i}\frac{1}{m_{i}}\sum_k m_k \psi_k
(f)\tilde\phi_i(\phi_k)\tilde\psi_i\\
&=&\sum_{i}\psi_i(f)\tilde\psi_i\\
&=& f.
\end{eqnarray*}
That ${\bf M}_{m, ( \phi_k), (\psi_k)}\circ {\bf
M}_{(\frac{1}{m_k}), (\tilde \psi_k), ( \tilde \phi_k )} f=f$ for
all $f\in X_2$ can be shown in an analogous way.
 Hence, $$({\bf M}_{m, (\phi_k), ( \psi_k )})^{-1}={\bf
M}_{(\frac{1}{m_k}), (\tilde \psi_k), ( \tilde \phi_k )}.$$
\end{proof}

For Banach spaces it is well known that the limit of finite rank
operators (in the operator norm) is a compact operator (although
this is not an equivalent conditions as is the case for Hilbert
spaces). We are using this property in:
\begin{lem}\label{sec:compactmult1}
Let $(\psi_k) \subseteq X_1^*$ be a  $p$-Bessel sequence for $X_1$
with bound $B_1$, let $(\phi_k) \subseteq X_2$ be a $q$-Bessel
sequence for $X_2^*$ with bound $B_2$. If $m\in\mathbf{c}_0$ then
${\bf M}_{m, (\phi_k), ( \psi_k )}$ is compact.
\end{lem}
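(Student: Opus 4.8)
The plan is to realize ${\bf M}_{m, (\phi_k), (\psi_k)}$ as an operator-norm limit of finite rank operators and then invoke the fact, recalled just before the statement, that such a limit is compact. For each $n \in \NN$ define the truncated symbol $m^{(n)}$ by $m^{(n)}_k = m_k$ for $k \le n$ and $m^{(n)}_k = 0$ for $k > n$. Since $m \in l^\infty$, each $m^{(n)} \in l^\infty$ as well, so Lemma \ref{sec:multopinf1} guarantees that the multipliers ${\bf M}_{m^{(n)}, (\phi_k), (\psi_k)}$ are well-defined bounded operators. First I would observe that each of these truncated multipliers has finite rank: by definition
$$ {\bf M}_{m^{(n)}, (\phi_k), (\psi_k)}(f) = \sum_{k=1}^{n} m_k \psi_k(f) \phi_k , $$
so its range is contained in the finite-dimensional subspace $\mathrm{span}\{\phi_1, \dots, \phi_n\} \subseteq X_2$.

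Next I would exploit that the multiplier depends linearly on its symbol. Directly from the defining formula, for any two bounded symbols the difference of the associated multipliers is the multiplier of the difference of the symbols; in particular
$$ {\bf M}_{m, (\phi_k), (\psi_k)} - {\bf M}_{m^{(n)}, (\phi_k), (\psi_k)} = {\bf M}_{m - m^{(n)}, (\phi_k), (\psi_k)} . $$
Applying the operator-norm bound of Lemma \ref{sec:multopinf1} to the right-hand side yields
$$ \norm{Op}{{\bf M}_{m, (\phi_k), (\psi_k)} - {\bf M}_{m^{(n)}, (\phi_k), (\psi_k)}} \le B_1 B_2 \cdot \norm{\infty}{m - m^{(n)}} = B_1 B_2 \cdot \sup_{k > n} |m_k| . $$

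The hypothesis $m \in \mathbf{c}_0$ means $m_k \to 0$, so $\sup_{k > n} |m_k| \to 0$ as $n \to \infty$. Hence the right-hand side tends to $0$, which shows that the finite rank operators ${\bf M}_{m^{(n)}, (\phi_k), (\psi_k)}$ converge to ${\bf M}_{m, (\phi_k), (\psi_k)}$ in operator norm. Since the space of compact operators is closed in the operator norm and contains all finite rank operators, ${\bf M}_{m, (\phi_k), (\psi_k)}$ is compact. I do not anticipate a genuine obstacle here; the argument is a clean truncation-and-approximation scheme. The only points requiring care are the verification that truncation indeed produces finite rank operators (immediate from the range being contained in $\mathrm{span}\{\phi_1,\dots,\phi_n\}$) and the correct bookkeeping when passing the symbol difference through Lemma \ref{sec:multopinf1}, together with the elementary observation that the tail supremum of a null sequence vanishes.
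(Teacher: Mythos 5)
Your proof is correct and follows essentially the same route as the paper's: truncate the symbol to $m^{(n)}$, use linearity in the symbol together with the norm bound of Lemma \ref{sec:multopinf1} to get operator-norm convergence of the finite rank truncations, and conclude compactness since ${\bf M}$ is an operator-norm limit of finite rank operators. Your write-up is slightly more detailed (explicitly justifying the finite rank of the truncations and the vanishing tail supremum), but the argument is identical.
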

\begin{proof} For a given $m \in \mathbf{c}_0$, let $m^{(N)} = \left( m_0, m_1, \dots, m_{N-1}, 0, 0 , \dots \right)$.
The symbol $m$ is converging to zero, so for all $\epsilon > 0$
there is a $N_\epsilon$ such that $\norm{\infty}{m - m^{(N)}} \le
\epsilon$ for all $N \ge N_\epsilon$. As $m \in l^\infty$ by Lemma
\ref{sec:multopinf1} we have for all $N \ge N_\epsilon$
\begin{eqnarray*}
 \norm{Op}{ {\bf M}_{m, (\phi_k), ( \psi_k )} - {\bf M}_{m^{(N)}, (\phi_k), ( \psi_k )}} &=& \norm{Op}{{\bf M}_{\left(m - m^{(N)}\right), (\phi_k), ( \psi_k )}} \\
  &\le& \norm{\infty}{m - m^{(N)}} \cdot B_2 \cdot B_1 \\
  &\le& \epsilon \cdot B_2 \cdot B_1.
\end{eqnarray*}
Therefore ${\bf M}_{m^{(N)}, (\phi_k), ( \psi_k )}$ is converging to
${\bf M}_{{m}, (\phi_k), ( \psi_k )}$ in the operator norm. As ${\bf
M}_{m^{(N)}, (\phi_k), ( \psi_k )}$ is clearly a finite rank
operator, we have shown the result.
\end{proof}

For two normed spaces $X$ and $Y$, $\mathcal{B}(X,Y)$ denotes the
set of all linear bounded operators from $X$ to $Y$.
 Let $W$, $X$, $Y$ and $Z$ be normed spaces. For elements $y\in Y$
 and $\omega\in X^{*}$ define an operator $y\otimes\omega\in
 \mathcal{B}(X,Y)$ by $$(y\otimes\omega)(z)=\omega(z)y \quad\textrm{for all}\quad
 z \in X.$$
\par For arbitrary $S\in \mathcal{B}(W,X)$, $T\in
\mathcal{B}(Y,Z)$,
 $y\in Y$, $\omega\in X^{*}$, $z\in Z$ and $\tau\in Y^{*}$ these
 operators satisfy
 $$(z\otimes\tau)(y\otimes\omega)=\tau(y) \cdot z\otimes\omega$$
 $$T(y\otimes\omega)=T(y)\otimes\omega$$
 $$(y\otimes\omega)S=y\otimes S^{*}(\omega)$$
 $$(y\otimes\omega)^{*}=\omega\otimes\kappa(y)$$
 $$ \| y\otimes\omega \|_{Op} = \| y\|_Y \| \omega \|_{X^*} $$
 where $\kappa:Y\hookrightarrow Y^{**}$ is the canonical injection
 defined by $$\kappa(y)(\eta)=\eta(y)\quad\textrm{for all}\quad
 y \in Y, \eta\in Y^{*}.$$
The above notations are borrowed from \cite{palmer}. By using the
above notations, we can write the $(p,q)$-Bessel multiplier in the
form
$${\bf M}_{m, (\phi_k), ( \psi_k )}
=\sum_{k}m_{k}\phi_{k}\otimes\psi_{k}.$$ It is easy to see that
$${\bf M}_{m, (\phi_k), ( \psi_k )}^{*}=\sum_{k}\overline{m_{k}}\psi_{k}\otimes\kappa(\phi_{k})={\bf M}_{\overline{m}, (\psi_k), ( \kappa(\phi_{k} ))}.$$

Putting the above results together, we obtain the following theorem
which is a generalization of one of the results in \cite{xxlmult1}
for Banach spaces.
\begin{thm} \label{sec:bessmulprop1} \it Let ${\bf M} = \MM_{m,(\phi_k),(\psi_k)}$
 be a $(p,q)$-Bessel multiplier for the $p$-Bessel sequence $(\psi_k) \subseteq X_1^*$,
 the $q$-Bessel sequence $(\phi_k) \subseteq X_2$ with  bounds $B_1$ and $B_2$. 
Then, the following hold.
\begin{enumerate}
\item If $m \in l^\infty$,
${\bf M}$ is a well defined bounded operator with
 $$\norm{Op}{\bf M} \le  B_2 B_1 \cdot
\norm{\infty}{m} .$$ Furthermore, the sum $\sum \limits_k m_k \psi_k
(f)  \phi_k$ converges unconditionally for all $f \in X_1$.
\item ${\bf M}_{m, (\phi_k), ( \psi_k )}^{*}=\sum_{k}\overline{m_{k}}\psi_{k}\otimes\kappa(\phi_{k})={\bf M}_{\overline{m}, (\psi_k), ( \kappa(\phi_{k} ))}.$

\item If $m \in \mathbf{c}_0$,
 $\bf M$ is a compact operator.
\end{enumerate}
\end{thm}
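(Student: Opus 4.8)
The plan is to read this theorem as an assembly of the material already developed, so that most of the work has effectively been carried out in the preceding lemmas, and the task is mainly to point to the right place and patch the one genuinely new ingredient. Part~(1) is nothing but Lemma~\ref{sec:multopinf1}: since $m\in l^\infty$ and $(\psi_k)$ is $p$-Bessel, $(m_k\psi_k(f))\in l^p$ for every $f\in X_1$, so the synthesis operator of the $q$-Bessel sequence $(\phi_k)$ applies and yields a well-defined element, with the bound $\norm{Op}{\bf M}\le B_2 B_1\norm{\infty}{m}$ and unconditional convergence coming from Proposition~\ref{sec:besselbound1}; I would simply cite the lemma. Likewise, Part~(3) is exactly Lemma~\ref{sec:compactmult1}: for $m\in\mathbf{c}_0$ the truncated symbols give finite-rank multipliers converging to ${\bf M}$ in operator norm, whence ${\bf M}$ is compact, so again I would invoke it directly.

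For Part~(2) I would start from the rank-one representation ${\bf M}=\sum_k m_k\,\phi_k\otimes\psi_k$ recorded just before the theorem and apply the adjoint rule $(y\otimes\omega)^*=\omega\otimes\kappa(y)$ to each summand, collecting the scalars through the (conjugate-)linearity built into the listed tensor identities; here the bar in $\overline{m_k}$ reflects the conjugate-linear convention in the first slot of $\otimes$ inherited from the Schatten notation of the introduction. This produces the series $\sum_k\overline{m_k}\,\psi_k\otimes\kappa(\phi_k)$, which I would then recognize as a $(q,p)$-Bessel multiplier: under reflexivity $\kappa$ is an isometric isomorphism, so $(\kappa(\phi_k))\subseteq X_2^{**}$ is a $q$-Bessel analysis sequence for $X_2^*$ with the same bound $B_2$, while $(\psi_k)$ plays the role of the synthesis sequence in $X_1^*$. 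Hence Lemma~\ref{sec:multopinf1} applies once more to the symbol $\overline m\in l^\infty$ and guarantees that ${\bf M}_{\overline m,(\psi_k),(\kappa(\phi_k))}$ is itself a well-defined bounded operator from $X_2^*$ into $X_1^*$, so both sides of the claimed identity make sense before one even argues they coincide.

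The one step needing genuine care, and the main obstacle, is interchanging the adjoint with the infinite sum: for a general bounded symbol the series $\sum_k m_k\,\phi_k\otimes\psi_k$ converges only in the strong, unconditional, pointwise sense and not in operator norm, so I cannot simply invoke the norm-continuity of $T\mapsto T^*$ to pass to the limit termwise. To sidestep this I would verify the identity by testing on vectors rather than at the level of series of operators: for $f\in X_1$ and $g\in X_2^*$, the definition of the Banach-space adjoint gives $({\bf M}^*g)(f)=g({\bf M}f)$, and expanding $g\bigl(\sum_k m_k\psi_k(f)\phi_k\bigr)=\sum_k m_k\psi_k(f)\,\kappa(\phi_k)(g)$ is legitimate because the inner sum converges unconditionally by Part~(1) and $g$ is continuous. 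This matches the action of $\bigl({\bf M}_{\overline m,(\psi_k),(\kappa(\phi_k))}\,g\bigr)(f)$, and since $f$ and $g$ are arbitrary the operator identity follows with no appeal to operator-norm convergence of the defining series, completing the proof.
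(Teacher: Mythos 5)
Your treatment of parts (1) and (3) is exactly the paper's: the theorem is presented there with no separate proof beyond the phrase ``putting the above results together,'' part (1) being Lemma \ref{sec:multopinf1} and part (3) being Lemma \ref{sec:compactmult1}, precisely as you cite them. The substantive difference is in part (2). The paper writes ${\bf M}=\sum_k m_k\,\phi_k\otimes\psi_k$ and asserts that ``it is easy to see'' the adjoint formula, implicitly applying the rule $(y\otimes\omega)^{*}=\omega\otimes\kappa(y)$ termwise; it never addresses the fact that the defining series converges only pointwise (unconditionally), not in operator norm, which is what termwise application of the norm-continuous map $T\mapsto T^{*}$ would require. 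You identify exactly this obstacle and bypass it by the weak verification $({\bf M}^{*}g)(f)=g({\bf M}f)=\sum_k m_k\psi_k(f)\,\kappa(\phi_k)(g)$, justified by continuity of $g$ and the unconditional convergence from part (1); this fills a genuine gap in the paper's one-line argument at no extra cost, and your observation that the right-hand side is itself a well-defined multiplier (via reflexivity and Lemma \ref{sec:multopinf1} with $p$ and $q$ interchanged) is also implicit but unstated in the paper. One caveat, inherited from the paper rather than introduced by you: with the paper's bilinear convention $(y\otimes\omega)(z)=\omega(z)y$ and bilinear duality pairing, your own weak computation produces the coefficient $m_k$, not $\overline{m_k}$; the bar is a leftover from the sesquilinear Hilbert-space setting of \cite{xxlmult1}, and the conjugate-linear ``convention'' you invoke to explain it is not actually supported by the tensor identities listed in the paper. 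Since that discrepancy originates in the statement itself, it does not count against your argument, but strictly speaking your proof establishes the identity with $m_k$ in place of $\overline{m_k}$.
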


\section{Nuclear operators in Banach spaces} \label{sec:nucl0}
The theory of trace-class operators in Hilbert spaces was created in
1936 by J. Murray and J. Von Neumann. In the earlier Fifties,
Alexander Grothendieck \cite{Gro55} and A. F. Ruston \cite
{Rus1,Rus2} independently extended this concept to operators acting
in Banach spaces. Trace-class operators on Banach spaces are called
nuclear operators. This idea is generalized in \cite{Pitch}:

Let $0<p\leq\infty$. A family $\textbf{x}=(x_{i})_{i\in I}\subseteq
X$, where $x_{i}\in X$ for $i\in I$, is called \textit{weakly
$p$-summable} if $(x^{*}(x_{i}))\in \ell^{p}(I)$ whenever $x^{*}\in
X^{*}$.
\par We put $$w_{p}(x_{i}):=\sup \{\|{x^{*}(x_{i})}\|_{p}:\|x^{*}\|\leq 1
\}.$$ The class of all weakly $p$-summable sequences on $X$ is
denoted by \textbf{$W_{p}$}(X). Clearly $w_{p}(x_{i}) < \infty$ ( by
Banach-Steinhaus Theorem ).
\par From the above notations, it is clear that if $(g_{i})_{i\in I}\subset
X^{*}$ is a $p$-Bessel sequence for $X$ then
 $(g_{i})\in W_{p}(X^{*})$\footnote{As mentioned in the introduction we only consider reflexive Banach spaces}.
\begin{defn}\cite{Pitch}
Let $0<r\leq\infty$, $1\leq p_1,q_1\leq\infty$, and
$1+\frac{1}{r}\geq\frac{1}{p_1}+\frac{1}{q_1}.$ An operator
$S\in\mathcal{B}(X,Y)$ is called $(r,p_1,q_1)$-nuclear if
$$S=\sum_{i=1}^{\infty}\sigma_{i}x_{i}^{*} \otimes y_i$$ with $(\sigma_{i})\in
\ell^{r}$, $(x_{i}^{*})\in W_{q'}(X^{*})$, and $(y_{i})\in
W_{p'}(Y)$ where
$\frac{1}{p_1}+\frac{1}{p'}=\frac{1}{q_1}+\frac{1}{q'}=1$. In the
case $r=\infty$ let us suppose that $(\sigma_{i})\in\mathbf{c}_{0}$.
We put
$$N_{(r,p_1,q_1)}(S):=\inf \left\{
\|(\sigma_{i})\|_{r} \cdot w_{q'}(x^{*}_{i}) \cdot w_{p'}(y_{i})
\right\},$$ where the infimum is taken over all so-called
$(r,p_1,q_1)$-nuclear representations described above.
\end{defn}
\begin{thm} \cite{Pitch} \label{rpq}
An operator $S\in \mathcal{B}(X,Y)$ is $(r,p_1,q_1)$-nuclear if and
only if there exist operators $F$, $D$ and $E$ with $S = F D E$,
such that $D_\sigma\in \mathcal{B}(\ell^{q'},\ell^{p_1})$ is a
diagonal operator of the form
$D_\sigma(\xi_{i})=(\sigma_{i}\xi_{i})$ with
$(\sigma_{i})\in\ell^{r}$ if $0<r<\infty $ and
$(\sigma_{i})\in\textbf{c}_{0}$ if $r=\infty$ . Furthermore, $E\in
\mathcal{B}(X,\ell^{q'})$ and $F\in \mathcal{B}(\ell^{p_1},Y)$. In
this case,
$$N_{(r,p_1,q_1)}(S): = \inf \|E\|\|(\sigma_{i})\|_{r}\|F\|,$$ where
the infimum is taken over all possible factorizations.
\end{thm}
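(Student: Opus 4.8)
The plan is to prove the two implications separately and, in each, to keep track of the relevant norms so that the formula for $N_{(r,p_1,q_1)}(S)$ falls out at the end. The one genuinely analytic ingredient, which I would isolate first as a preliminary fact, is the mapping property of diagonal operators between sequence spaces: if $(\sigma_i)\in\ell^r$, then $D_\sigma(\xi_i)=(\sigma_i\xi_i)$ maps $\ell^{q'}$ boundedly into $\ell^{p_1}$ with $\|D_\sigma\|\le\|(\sigma_i)\|_r$, and this holds exactly under the standing hypothesis $1+\frac1r\ge\frac1{p_1}+\frac1{q_1}$. Rewriting that hypothesis via $\frac1{p_1}+\frac1{p'}=\frac1{q_1}+\frac1{q'}=1$ turns it into $\frac1{p_1}\le\frac1r+\frac1{q'}$, which is precisely the index condition under which H\"older's inequality (with conjugate exponents $\frac{r}{p_1}$ and $\frac{q'}{p_1}$ in the borderline case $\frac1{p_1}=\frac1r+\frac1{q'}$, and a trivial $\ell$-space embedding otherwise) gives $\|(\sigma_i\xi_i)\|_{p_1}\le\|(\sigma_i)\|_r\,\|(\xi_i)\|_{q'}$.

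For the direction ``nuclear $\Rightarrow$ factorization'', I would start from a representation $S=\sum_i\sigma_i\,x_i^*\otimes y_i$ with $(\sigma_i)\in\ell^r$, $(x_i^*)\in W_{q'}(X^*)$ and $(y_i)\in W_{p'}(Y)$, and build the three factors by hand. Define the analysis map $E:X\to\ell^{q'}$ by $E(x)=(x_i^*(x))_i$; since $X$ is reflexive, weak $q'$-summability of $(x_i^*)$ says exactly that $E(x)\in\ell^{q'}$ for every $x$, and $\|E\|=w_{q'}(x_i^*)$. Define the synthesis map $F:\ell^{p_1}\to Y$ by $F((\xi_i))=\sum_i\xi_i y_i$; a one-line estimate $|y^*(F\xi)|=|\sum_i\xi_i y^*(y_i)|\le\|\xi\|_{p_1}\,\|(y^*(y_i))\|_{p'}$ shows $\|F\|\le w_{p'}(y_i)$. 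With $D_\sigma$ as above, a direct computation gives $FD_\sigma E(x)=\sum_i\sigma_i x_i^*(x)y_i=S(x)$, and the constants combine to $\|E\|\,\|(\sigma_i)\|_r\,\|F\|\le w_{q'}(x_i^*)\,\|(\sigma_i)\|_r\,w_{p'}(y_i)$. Taking the infimum over all nuclear representations then yields $\inf\|E\|\|(\sigma_i)\|_r\|F\|\le N_{(r,p_1,q_1)}(S)$.

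For the converse, given a factorization $S=FD_\sigma E$ I would read off a nuclear representation in the dual way: let $e_i\in X^*$ be the $i$-th coordinate functional of $E$, i.e. $e_i(x)=(Ex)_i$, and let $y_i=F(\varepsilon_i)$ be the image of the $i$-th unit vector of $\ell^{p_1}$. Then $S(x)=F(D_\sigma Ex)=\sum_i\sigma_i e_i(x)y_i$, so $S=\sum_i\sigma_i\,e_i\otimes y_i$. That $(e_i)\in W_{q'}(X^*)$ with $w_{q'}(e_i)=\|E\|$ is immediate from $(e_i(x))_i=E(x)\in\ell^{q'}$ and reflexivity; that $(y_i)\in W_{p'}(Y)$ with $w_{p'}(y_i)=\|F\|$ follows by observing $(y^*(y_i))_i=F^*y^*$, where $F^*:Y^*\to(\ell^{p_1})^*=\ell^{p'}$ is the adjoint, so $w_{p'}(y_i)=\|F^*\|=\|F\|$. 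This is a valid $(r,p_1,q_1)$-nuclear representation whose value equals $\|(\sigma_i)\|_r\,\|E\|\,\|F\|$; taking the infimum over factorizations gives $N_{(r,p_1,q_1)}(S)\le\inf\|E\|\|(\sigma_i)\|_r\|F\|$, which combined with the previous inequality yields equality.

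The main obstacle I anticipate is not either construction — both are essentially bookkeeping — but the diagonal-operator step, and in particular verifying that the index condition $1+\frac1r\ge\frac1{p_1}+\frac1{q_1}$ is exactly what makes $D_\sigma:\ell^{q'}\to\ell^{p_1}$ bounded; getting the H\"older exponents right across the borderline and non-borderline cases is the only place where care is genuinely needed. A secondary point to handle cleanly is the repeated use of reflexivity to pass between weak $q'$-summability of functionals in $X^*$ and genuine $\ell^{q'}$-membership of $(x_i^*(x))_i$, and to identify $(\ell^{p_1})^*$ with $\ell^{p'}$ for the adjoint argument in the converse.
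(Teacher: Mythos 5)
There is no proof in the paper to compare yours against: Theorem \ref{rpq} is stated as a citation from Pietsch's monograph \cite{Pitch} and is used as a black box, its only role being to produce Corollary \ref{multrpg} from the factorization ${\bf M}_{m,(\phi_k),(\psi_k)} = T_{\phi_k} D_m U_{\psi_k}$. Judged on its own merits, your argument is the standard factorization proof (essentially the one in \cite{Pitch}) and is correct in substance: the H\"older estimate $\|D_\sigma \xi\|_{p_1}\le \|(\sigma_i)\|_r \|\xi\|_{q'}$ under $\frac{1}{p_1}\le \frac{1}{r}+\frac{1}{q'}$; the maps $E(x)=(x_i^*(x))_i$ and $F(\xi)=\sum_i \xi_i y_i$ built from a nuclear representation, giving $\inf \|E\|\,\|(\sigma_i)\|_r\,\|F\| \le N_{(r,p_1,q_1)}(S)$; and the converse extraction $e_i(x)=(Ex)_i$, $y_i=F(\varepsilon_i)$ from a factorization, giving the reverse inequality. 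Your repeated use of reflexivity to identify membership of $(x_i^*)$ in $W_{q'}(X^*)$ with $(x_i^*(x))_i\in\ell^{q'}$ for all $x\in X$ is legitimate here, since the paper restricts attention to reflexive spaces.

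Two extreme-index points need repairs that you only gesture at. When $p_1=\infty$ (so $p'=1$), the identification $(\ell^{p_1})^*=\ell^{p'}$ behind your claim $w_{p'}(y_i)=\|F^*\|=\|F\|$ fails, since $(\ell^\infty)^*\neq\ell^1$; the fix is to note $y^*(y_i)=(F^*y^*)(\varepsilon_i)$ and restrict $F^*y^*$ to $\overline{\mathrm{span}}\{\varepsilon_i\}=c_0$, whose dual is $\ell^1$, which yields the inequality $w_1(y_i)\le\|F\|$ --- and the inequality is all the infimum argument needs. Likewise, norm convergence of $F(\xi)=\sum_i\xi_i y_i$ follows from the tail estimate $\bigl\|\sum_{m\le i\le n}\xi_i y_i\bigr\|\le \|(\xi_i)_{i=m}^{n}\|_{p_1} w_{p'}(y_i)$ only when $p_1<\infty$; for $p_1=\infty$ one must settle for weak convergence (again via reflexivity), and in the converse direction the expansion $D_\sigma Ex=\sum_i \sigma_i e_i(x)\varepsilon_i$ converges in $\ell^\infty$ only because $(\sigma_i)\in\ell^r$ (or $\mathbf{c}_0$ when $r=\infty$) forces $D_\sigma Ex\in c_0$. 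These are routine patches; the structure of your proof is the right one.
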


 From  Theorem \ref{rpq} and the above notations, we can easily conclude the next result for multipliers using $${\bf M}_{m, (\phi_k), ( \psi_k )} = T_{\phi_k} D_m
U_{\psi_k}$$ as a decomposition of ${\bf M}$.
\begin{cor} \label{multrpg}
Let $(\psi_k) \subseteq X_1^*$ be a  $p$-Bessel sequence for $X_1$
with bound $B_1$, let $(\phi_k) \subseteq X_2$ be a $q$-Bessel
sequence for
$X_2^*$ with bound $B_2$. 
 Let $r>0$ and $m \in \ell^r$. Then
${\bf M}_{m, (\phi_k), ( \psi_k )}$ is a $(r,p,q)$-nuclear operator
with
$$N_{(r,p,q)}({\bf M}) \le B_1 B_2 \| m \|_r .$$
\end{cor}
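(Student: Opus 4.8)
The plan is to invoke the factorization criterion of Theorem \ref{rpq} applied to the decomposition ${\bf M}_{m,(\phi_k),(\psi_k)} = T_{\phi_k} D_m U_{\psi_k}$ recorded just before the statement, so that essentially no computation beyond bookkeeping of exponents is needed. First I would specialize Theorem \ref{rpq} to $p_1 = p$ and $q_1 = q$, so that the conjugate indices occurring there become $q' = p$ and $p' = q$ by the standing assumption $\frac1p + \frac1q = 1$. Before anything else I would verify the admissibility condition from the definition of $(r,p,q)$-nuclearity, namely $1 + \frac1r \ge \frac{1}{p} + \frac{1}{q}$; since the right-hand side equals $1$, this reduces to $\frac1r \ge 0$ and therefore holds for every $r > 0$.

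Next I would match the three factors to the operators $E$, $D_\sigma$, $F$ demanded by Theorem \ref{rpq}. The analysis operator $E := U_{\psi_k}$ maps $X_1$ into $\ell^p = \ell^{q'}$ and, directly from its definition, satisfies $\|E\| \le B_1$. The synthesis operator $F := T_{\phi_k}$ is bounded out of $\ell^{p} = \ell^{p_1}$ with $\|F\| \le B_2$ by Proposition \ref{sec:besselbound1}; here one applies that proposition to the $q$-Bessel sequence $(\phi_k) \subseteq X_2 = (X_2^*)^*$ relative to the base space $X_2^*$, whose synthesis operator $T_{\phi_k} : \ell^{q'} = \ell^p \to X_2$ has norm at most $B_2$. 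Finally, since $m \in \ell^r \subseteq \ell^\infty$, the diagonal operator $D_m(\xi_i) = (m_i \xi_i)$ is a bounded map on $\ell^p$ of exactly the prescribed shape, with defining sequence $(\sigma_i) = m \in \ell^r$ (reading $\ell^\infty$ as $\mathbf{c}_0$ in the limiting case $r = \infty$). Hence ${\bf M} = F D_m E$ is a legitimate $(r,p,q)$-nuclear factorization.

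The conclusion is then immediate from the infimum formula $N_{(r,p,q)}(S) = \inf \|E\|\,\|(\sigma_i)\|_r\,\|F\|$ of Theorem \ref{rpq}: the single factorization constructed above already gives $N_{(r,p,q)}({\bf M}) \le \|E\|\,\|m\|_r\,\|F\| \le B_1\,\|m\|_r\,B_2$, which is the asserted estimate. I do not expect a substantive obstacle; the only delicate point is the orientation of the exponents for the synthesis factor. Because $(\phi_k)$ is $q$-Bessel while living in the bidual $X_2$, its synthesis operator is naturally defined on $\ell^{q'} = \ell^p$ rather than on $\ell^q$, and one must make sure this domain coincides with the codomain $\ell^{p_1} = \ell^p$ of $D_m$ so that the triple product $T_{\phi_k} D_m U_{\psi_k}$ is well-typed; once the identities $q' = p$ and $p' = q$ are in hand this is automatic. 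As a cross-check one could instead verify the definition directly, exhibiting $\sum_k m_k\, \phi_k \otimes \psi_k$ as an explicit $(r,p,q)$-nuclear representation with $(\psi_k) \in W_p(X_1^*)$, $w_p(\psi_k) \le B_1$, $(\phi_k) \in W_q(X_2)$, $w_q(\phi_k) \le B_2$, and $(m_k) \in \ell^r$, which yields the same bound.
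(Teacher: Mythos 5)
Your proposal is correct and follows exactly the route the paper intends: the paper derives Corollary \ref{multrpg} by applying the factorization criterion of Theorem \ref{rpq} to the decomposition ${\bf M}_{m,(\phi_k),(\psi_k)} = T_{\phi_k} D_m U_{\psi_k}$, with $q'=p$, $p'=q$, $\|U_{\psi_k}\|\le B_1$, $\|T_{\phi_k}\|\le B_2$, and $(\sigma_i)=m\in\ell^r$, giving $N_{(r,p,q)}({\bf M})\le B_1 B_2\|m\|_r$. You have simply filled in the bookkeeping (admissibility of the exponents and the typing of the three factors) that the paper leaves implicit, so there is nothing to correct.
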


\section{Changing the ingredients} \label{sec:changing0}

Results from \cite{xxlmult1} can be generalized to 
$p$-Bessel sequences:
\begin{thm} \label{sec:frammulprop1}
Let $\MM = \MM_{m,(\phi_k),(\psi_k)}$
 be a $(p,q)$-Bessel multiplier for the $p$-Bessel sequences $(\psi_k) \subseteq X_1^*$,
 the $q$-Bessel sequence $(\phi_k) \subseteq X_2$ with  bounds $B_1$ and
 $B_2$. Let $p_1, q_1 \ge 1$ be such that $\frac{1}{p_1}+ \frac{1}{q_1} = 1$ allowing $p_1, q_1 = \infty$.
  Then the operator $\MM$ depends continuously on $m$, $(\psi_i)$ and $(\phi_i)$, in
   the following sense: Let $(\psi_i^{(l)}) \subseteq X_1^*$ and $(\phi_i^{(l)}) \subseteq X_2$ be Bessel sequences
      \footnote{Please note that for a convergence of $p$-Bessel sequences in an $l^{p}$-sense we would get the Bessel property by Corollary \ref{sec:framesimi1} for big enough $l$. }
   indexed by $l \in I$.
\begin{enumerate}
\item Let $m^{(l)} \rightarrow m$ in $l^{p_1}$. Then
$ \norm{Op}{M_{m^{(l)},(\psi_i),(\phi_i)} - M_{m,(\psi_i),(\phi_i)}}
\rightarrow 0 $.
\item Let $m \in l^{p_1}$ and let the sequences $(\psi_i^{(l)})$ converge to $(\psi_i)$ in an $l^{q_1}$-sense.
Then for $l \rightarrow \infty$
$$\norm{Op}{M_{m,(\psi_i^{(l)}),(\phi_i)} - M_{m,(\psi_i),(\phi_i)}}
\rightarrow 0.$$
\item Let $m \in l^{p_1}$ and let the sequences $(\phi_i^{(l)})$ converge to $(\phi_i)$ in an $l^{q_1}$-sense.
Then for $l \rightarrow \infty$
$$
\norm{Op}{M_{m,(\psi_i),(\phi_i)} - M_{m,(\psi_i),(\phi_i^{(l)})}}
\rightarrow 0.$$
\item Let $m^{(l)} \rightarrow m$ in $l^{p_1}$ and let the sequences $(\psi_i^{(l)})$ respectively $(\phi_i^{(l)})$
converge to $(\psi_i)$ respectively $(\phi_i)$ in an
$l^{q_1}$-sense. Then for $l \rightarrow \infty$ $
\norm{Op}{M_{m^{(l)},(\psi_i^{(l)}),(\phi_i^{(l)})} -
M_{m,\psi_i,\phi_i}} \rightarrow 0$.
\end{enumerate}
\end{thm}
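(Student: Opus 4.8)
The plan is to exploit the fact that $\MM_{m,(\phi_k),(\psi_k)}$ is linear in each of its three arguments separately: with the other two ingredients fixed one has $\MM_{m,(\phi_k),(\psi_k)} - \MM_{m',(\phi_k),(\psi_k)} = \MM_{m-m',(\phi_k),(\psi_k)}$, and a change in the analysis or synthesis sequence alone likewise produces the multiplier attached to the difference sequence. Thus each of the four claims reduces to estimating the operator norm of a single multiplier one of whose ingredients is a ``difference''. The crux is the following mixed-norm bound: if $m \in l^{p_1}$, $(\phi_k) \subseteq X_2$ is $q$-Bessel with bound $B_2$, and $(\eta_k) \subseteq X_1^*$ satisfies $(\norm{X_1^*}{\eta_k}) \in l^{q_1}$, then
$$\norm{Op}{\MM_{m,(\phi_k),(\eta_k)}} \le B_2\, \norm{p_1}{m} \left( \sum_k \norm{X_1^*}{\eta_k}^{q_1} \right)^{1/q_1},$$
together with the symmetric statement obtained by exchanging analysis and synthesis, namely that a synthesis difference $(\zeta_k) \subseteq X_2$ with $(\norm{X_2}{\zeta_k}) \in l^{q_1}$, paired with a $p$-Bessel analysis sequence $(\psi_k)$ of bound $B_1$, satisfies $\norm{Op}{\MM_{m,(\zeta_k),(\psi_k)}} \le B_1 \norm{p_1}{m}\left(\sum_k\norm{X_2}{\zeta_k}^{q_1}\right)^{1/q_1}$.

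To prove this estimate I would pass to the dual: for $f \in X_1$,
$$\norm{X_2}{\MM_{m,(\phi_k),(\eta_k)} f} = \sup_{\norm{X_2^*}{h}\le 1}\left| \sum_k m_k\, \eta_k(f)\, h(\phi_k)\right| \le \sup_{\norm{X_2^*}{h}\le 1}\sum_k |m_k|\,|\eta_k(f)|\,|h(\phi_k)|,$$
and then apply the generalized H\"older inequality to the three factors with exponents $p_1$, $q_1$ and $\infty$, which is legitimate since $\tfrac1{p_1}+\tfrac1{q_1}+0 = 1$. The symbol contributes $\norm{p_1}{m}$; the difference factor contributes $(\sum_k|\eta_k(f)|^{q_1})^{1/q_1} \le \norm{X_1}{f}(\sum_k\norm{X_1^*}{\eta_k}^{q_1})^{1/q_1}$; and the retained Bessel factor is absorbed by the $\infty$-exponent, using $|h(\phi_k)| \le \norm{X_2}{\phi_k}\,\norm{X_2^*}{h} \le B_2$ (a $q$-Bessel sequence has $\norm{X_2}{\phi_k}\le B_2$). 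Taking the supremum over $\norm{X_2^*}{h}\le1$ yields the claim; convergence of the underlying series is automatic since the multiplier is the difference of two unconditionally convergent ones. The key point is precisely this choice of exponents: since $\tfrac1{p_1}+\tfrac1{q_1}=1$ already exhausts the H\"older budget, the retained Bessel sequence must be controlled through its uniform bound rather than its full $l^q$ summability.

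With the estimate in hand the four parts follow quickly. For (1), where both sequences are the fixed Bessel sequences, Lemma \ref{sec:multopinf1} gives $\norm{Op}{\MM_{m^{(l)}-m,(\phi_k),(\psi_k)}} \le B_1 B_2 \norm{\infty}{m^{(l)}-m} \le B_1 B_2\norm{p_1}{m^{(l)}-m} \to 0$, using $\norm{\infty}{\cdot}\le\norm{p_1}{\cdot}$. Parts (2) and (3) are direct applications of the mixed-norm estimate and its symmetric form to the difference sequences $\eta_k=\psi_k^{(l)}-\psi_k$ and $\zeta_k = \phi_k - \phi_k^{(l)}$, whose $l^{q_1}$-norms tend to $0$ by hypothesis. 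For (4) I would split the total difference by the triangle inequality into three terms in which $m$, then $(\psi_i)$, then $(\phi_i)$ is changed one at a time, and bound each by the relevant case above.

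The one place requiring care, and the main obstacle, is the bookkeeping of constants in part (4): in the intermediate terms the ``fixed'' ingredients are the perturbed sequences $(\psi_i^{(l)})$ and $(\phi_i^{(l)})$, so their Bessel bounds must be controlled uniformly in $l$. This is supplied by Corollary \ref{sec:framesimi1}, which guarantees that for all large $l$ these sequences are Bessel with bounds at most $B_1+1$ and $B_2+1$; substituting these uniform constants into the estimates keeps every factor bounded while the difference factors still tend to $0$, so all three terms vanish as $l\to\infty$.
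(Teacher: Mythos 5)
Your proof is correct and follows essentially the same route as the paper's: reduce each part by linearity to a multiplier whose one ingredient is a difference sequence, bound it by H\"older against the symbol and the $l^{q_1}$-norms of the differences while retaining only the uniform element bound $\sup_k\|\phi_k\|_{X_2}\le B_2$ on the other sequence (the paper gets the identical estimate by summing the rank-one norms $|m_k|\,\|\psi_k^{(l)}-\psi_k\|\,\|\phi_k\|$; your duality/generalized-H\"older step is just another derivation of that bound), and split part (4) into the same three one-ingredient-at-a-time terms. You are in fact more careful than the paper on the one delicate point --- the paper silently reuses $B_1,B_2$ as bounds for the perturbed sequences in (4) (and carries factors $\sqrt{B_1B_2}$, an apparent leftover from the Hilbert-space case) --- but note that Corollary \ref{sec:framesimi1} as stated requires closeness in the $l^{p}$- (resp.\ $l^{q}$-) sense, not the $l^{q_1}$-sense assumed in the theorem, so for general $q_1$ you should instead get uniformity from the elementwise estimate $\sup_k\|\phi_k^{(l)}\|\le B_2+\varepsilon$ (which does follow from $l^{q_1}$-convergence) and reorder the telescoping so that the symbol is changed in the term where both sequences are already the unperturbed ones; with that reordering every term is controlled by $B_1$, $B_2$, $\|m\|_{p_1}+\varepsilon$ and the vanishing difference factors, and no uniform Bessel bound for the perturbed sequences is needed at all.
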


\begin{proof}
\begin{enumerate}

\item
By Theorem \ref{sec:bessmulprop1}
\begin{eqnarray*}
\norm{Op}{\MM_{m^{(l)},(\psi_k),(\phi_k)} -
\MM_{m,(\psi_k),(\phi_k)}}&=&\norm{Op}{\MM_{\left( m^{(l)} -
m\right),(\psi_k),(\phi_k)}}\\
&\leq& \norm{\infty}{m^{(l)}-m}
\sqrt{B_1 B_2}\\
&\leq& \| m^{(l)} - m\|_{p_1}\sqrt{B_1 B_2}\\
&\leq& \epsilon \sqrt{B_1 B_2}.
\end{eqnarray*}
for $l > N_{\epsilon}$.

\item 
For $l > N_\epsilon$
\begin{eqnarray*} \norm{Op}{\sum m_k \psi_k^{(l)} \otimes_i \phi_k - \sum m_k \psi_k \otimes_i \phi_k}&=& \norm{Op}
{\sum m_k \left( \psi_k^{(l)}- \psi_k \right) \otimes_i \phi_k} \\
&\leq&  \sum \limits_k \left| m_k \right| \norm{X_1^*}{\psi_k^{(l)}-
\psi_k} \sqrt{B_2} \\ &\leq& \sqrt{B_2} \norm{p_1}{m} \left( \sum
\norm{X_1^*}{\psi_k^{(l)}- \psi_k}^{q_1}\right)^{1/q_1}\\&\leq&
\sqrt{B_2} \norm{p_1}{m} \varepsilon.
\end{eqnarray*}

\item Use corresponding arguments as in (2).

\item
\begin{eqnarray*} \norm{}{M_{m^{(l)},(\psi_k^{(l)}),(\phi_k^{(l)})} - M_{m,(\psi_k),(\phi_k)}} &\leq& \norm{}{M_{m^{(l)},
(\psi_k^{(l)}),(\phi_k^{(l)})} -
M_{m,(\psi_k^{(l)}),(\phi_k^{(l)})}}  \\
& +& \norm{}{M_{m,(\psi_k^{(l)}),(\phi_k^{(l)})} -
M_{m,(\psi_k),(\phi_k^{(l)})}}\\& +&
\norm{}{M_{m,(\psi_k),(\phi_k^{(l)})}
 - M_{m,(\psi_k),(\phi_k)}}
\\& \le& \varepsilon \sqrt{B_1 B_2} + \norm{p_1}{m} \varepsilon \sqrt{ B_2}
+ \norm{}{m} \sqrt{B_1} \varepsilon \\&=& \varepsilon \cdot \left(
\sqrt{ B_1 B_2} + \norm{p_1}{m} \left( \sqrt{B_2} + \sqrt{B_1}
\right) \right)
\end{eqnarray*} for $l$ bigger than the maximum
$N$ needed for the convergence conditions.
\end{enumerate}
\end{proof}

\section{Outlook and Perspectives}

We have shown that the concept of multipliers can be extended to
$p$-frames in Banach spaces. It can also be done for other settings.
For $g$-frames a paper was already accepted \cite{rahgenmul10}. In
the future we will consider to extend this notion to other setting,
for example for Frechet frames, matrix valued frames, $pg$-frames
\cite{pg} or continuous frames.

In particular the last notion can be interesting also for
application as in this setting the question, how continuous and
discrete frame multipliers can be related, is of relevance. This
would be an interesting result for the link of STFT and Gabor
multipliers. Such a connection is particular interesting in relating
a physical model, which normally is continuous, using multipliers to
the implemented algorithm, which is discrete and finite-dimensional.

For the future work the relation of $(p,q,r)$-nuclear operators with
Gelfand triple may be investigated.

 For applications wavelet, Gabor
and frames of translates are very important classes of frames. Most
of these systems can be described as localized frames
\cite{forngroech1}. For multipliers of localized frames, which are
currently investigated, the results of this paper are directly
applicable and so can become more relevant for signal processing
algorithms.
We further hope that the results in this paper can be directly
useful for applications in signal processing, as both Banach space
methods and multipliers become more and more important for
applications, as mentioned in the introduction.

\section*{Acknowledgment}
Some of the results in this paper were obtained during the first
author's visit at the Acoustics Research Institute, Austrian Academy
of Sciences, Austria. He thanks this institute for their
hospitality.

This work was partly supported by the WWTF project MULAC ('Frame
Multipliers: Theory and Application in Acoustics; MA07-025).

The authors would like to thank Diana Stoeva for her discussions and
comments. Also, the authors would like to thank referees for their
suggestions.

\end{document}